\numberwithin{equation}{section}
\newcommand{\cE}{{\mathcal E}}
\newcommand{\cI}{{\mathcal I}}
\newcommand{\cL}{{\mathcal L}}
\newcommand{\cO}{{\mathcal O}}
\newcommand{\cP}{{\mathcal P}}
\newcommand{\C}{{\mathbb C}}
\newcommand{\K}{{\mathbb K}}
\newcommand{\bN}{\overline{N}}
\newcommand{\PP}{{\mathbb P}}
\newcommand{\tW}{\widetilde{E}}
\newcommand{\Ker}{\mathrm{Ker}}
\newcommand{\Image}{\mathrm{Im}}
\newcommand{\Iden}{\mathrm{Id}}
\newcommand{\Hom}{\mathrm{Hom}}
\newcommand{\rank}{\mathrm{rk}\,}
\newcommand{\Pic}{\mathrm{Pic}}
\newcommand{\ev}{\mathrm{ev}}
\newcommand{\Kc}{{K}}
\newcommand{\Oc}{{{\cO}_{C}}}
\newcommand{\sRat}{\underline{\mathrm{Rat}}\,}
\newcommand{\Rat}{\mathrm{Rat}\,}
\newcommand{\sPrin}{\mathrm{\underline{Prin}}\,}
\newcommand{\Prin}{\mathrm{Prin}\,}
\newcommand{\Gr}{\mathrm{Gr}}
\newcommand{\Quot}{\mathrm{Quot}}
\newcommand{\Supp}{\mathrm{Supp}}
\newcommand{\isom}{\xrightarrow{\sim}}
\newcommand{\Osc}{\mathrm{Osc}}
\newcommand{\codim}{\mathrm{codim}}
\newcommand{\GL}{\mathrm{GL}}
\newtheorem{theorem}{{\textbf Theorem}}[section]
\newtheorem{proposition}[theorem]{{\textbf Proposition}}
\newtheorem{corollary}[theorem]{{\textbf Corollary}}
\newtheorem{lemma}[theorem]{{\textbf Lemma}}
\newtheorem{ctn}[theorem]{{\textbf Caution}}
\newenvironment{caution}{\begin{ctn}\rm}{\end{ctn}}
\newtheorem{defn}[theorem]{{\textbf Definition}}
\newtheorem{notn}[theorem]{{\textbf Notation}}
\newtheorem{remit}[theorem]{{\textbf Remark}}
\newenvironment{remark}{\begin{remit}\rm}{\end{remit}}
\newenvironment{definition}{\begin{defn}\rm}{\end{defn}}
\newenvironment{notation}{\begin{notn}\rm}{\end{notn}}
\title[Quot schemes, Segre invariants, and inflectional loci]{Quot schemes, Segre invariants, and inflectional loci of scrolls over curves}
\author{George H. Hitching}
\address{Oslo Metropolitan University, Postboks 4, St. Olavs plass, 0130 Oslo, Norway.}
\email{gehahi@oslomet.no}
\begin{document}

\begin{abstract} Let $E$ be a vector bundle over a smooth curve $C$, and $S = \PP E$ the associated projective bundle. We describe the inflectional loci of certain projective models $\psi \colon S \dashrightarrow \PP^n$ in terms of Quot schemes of $E$. This gives a geometric characterisation of the Segre invariant $s_1 (E)$, which leads to new geometric criteria for semistability and cohomological stability of bundles over $C$. We also use these ideas to show that for general enough $S$ and $\psi$, the inflectional loci are all of the expected dimension. An auxiliary result, valid for a general subvariety of $\PP^n$, is that under mild hypotheses, the inflectional loci associated to a projection from a general centre are of the expected dimension. \end{abstract}

\maketitle

\section{Introduction}

Let $C$ be a smooth projective curve. It has long been known that the extrinsic geometry of maps of $C$ to projective space is closely connected with the cohomological properties of line bundles over $C$, and the geometry of the Riemann theta divisor and other Brill--Noether loci $W^r_d (C)$. See \cite{ACGH} for an overview, and \cite{KS}, \cite{CS} for further examples.

Now suppose $E \to C$ is a vector bundle of rank $r \ge 2$, and write $S := \PP E$. It is natural to ask again how the properties of $E$ and the moduli spaces containing it are reflected in the geometry of $S$ and its projective models. Of particular interest are properties which have no counterpart for line bundles, such as stability, and more generally Quot schemes of quotients of positive rank.

We mention some examples of results of this type. For $r = 2$, the natural identification between sections of the ruled surface $\PP E$ and line subbundles of $E$ is exploited in \cite{CCFMBrNoe} to study generalised Brill--Noether loci, and in \cite{CCFMnonsp} to study Hilbert schemes of scrolls and curves. Moving to higher rank; in \cite{Bri}, certain properties of the generalised theta divisor of $E$ are shown to depend on a variety of defective secants to $S \to |\cO_{\PP E} (1)|^*$. More closely related to the present work is \cite[{\S} 1]{IT}, where the degrees of minimal rank one quotients of $E^*$ are linked to ampleness of $S \dashrightarrow |\cO_{\PP E} (1)|^*$.

%\subsection*{Inflectional loci of scrolls} %A topic in projective geometry which has received considerable attention is the \emph{osculatory behaviour} of scrolls, and the related notions of inflectional loci and dual varieties. %Inflectional loci are singularities of higher differentials of maps $\PP E \dashrightarrow \PP^n$. %points where the image is ``flat'' to some order. 
%Scrolls over higher dimensional base; quadric fibrations; higher Gauss maps; enveloping bundles ?
 In the present article, we study a connection between \emph{inflectional properties} of linearly normal models (not necessarily embeddings) of $S$ and the Quot schemes and stability properties of the associated bundles $E$. Osculatory behaviour and inflectional properties of scrolls have been much studied. They are used to classify scrolls over $\PP^1$ in \cite{PT} and (via dual varieties) in \cite{PS}. The osculating spaces and dual varieties of elliptic scrolls are studied in \cite{MP}. In \cite{LMP} a formula is given which enumerates the inflection points of a scroll with suitable numerical invariants over a curve of any genus, when this is finite. (More generally, this formula gives the cohomology class of the inflectional locus, when this is of the expected dimension.)

To state our results, we firstly review the notions of stability and Segre invariants for bundles over curves. Recall that the \textsl{slope} of a vector bundle $F \to C$ is the ratio $\deg(F) / \rank (F)$. A bundle $E$ is said to be \textsl{stable} (resp., \textsl{semistable}) if $\mu(F) < \mu(E)$ (resp., $\mu(F) \le \mu (E)$) for all proper subbundles $F \subset E$. It is very well known that this property is of fundamental importance in moduli questions (see for example \cite{LeP}).

The notion of stability can be refined as follows. Suppose $E$ has rank $r$ and degree $d$. For $1 \le n \le r-1$, the \textsl{Segre invariant} $s_n (E)$ is defined by
\begin{equation} s_n (E) \ = \ \min \{ nd - r \cdot \deg (F) : F \subset E \hbox{ a vector subbundle of rank } n \} . \label{defnSegreInv} \end{equation}
The term ``invariant'' is used because $s_n ( E ) = s_n ( E \otimes L )$ for any line bundle $L$. Clearly $E$ is stable (resp., semistable) if and only if $s_n (E) > 0$ (resp., $s_n (E) \ge 0$) for $1 \le n \le r-1$. Segre invariants define stratifications on the moduli spaces of bundles over $C$, which are studied in \cite{BPL} and \cite{RTiB}. In \cite{LN}, the Segre invariants of rank two bundles are interpreted geometrically in terms of secant varieties to a projective model of the curve. This interpretation is generalised to higher rank and to symplectic and orthogonal bundles in \cite{CH1, CH2, CH5} and elsewhere.

In the present work, we study a link of a different kind between $s_1 (E)$ and the extrinsic geometry of $\PP E =: S$. Let us give an overview of our results. Let $\pi \colon S \to C$ be the projection. For any $M \in \Pic^0 (C)$, write $\cL_M$ for the line bundle $\cO_{\PP E} (1) \otimes \pi^* M$ over $S$.

In Proposition \ref{ParamInfl}, we give a key technical result, which is a criterion for the nonemptiness of the inflectional loci associated to the map $S \dashrightarrow | \cL_M |^*$ in terms of certain Quot schemes of $E$. This leads to the following characterisation of the Segre invariant $s_1 (E)$.

\vspace{0.2cm}

\noindent \textbf{Theorem \ref{MainA}.} \textit{Suppose $k \ge 0$, and let $E$ be a bundle of rank $r$ and degree $d$. Then the following are equivalent.
\begin{enumerate}
\item[(1)] $s_1 ( E ) > d + r ( 2g - 1 + k )$.
\item[(2)] For all $M \in \Pic^0 ( C )$ and all $x \in S$, the osculating space $\Osc^k ( S, x ) \subseteq | \cL_M |^*$ has dimension $kr$. In particular, the $k$th inflectional locus associated to the map $S \to | \cO_{\PP E} (1) \otimes \pi^* M |^*$ is empty.
\end{enumerate}}

\vspace{0.2cm}

\noindent Using this, we characterise the semistability property as follows.

\vspace{0.2cm}

\noindent \textbf{Theorem \ref{MainB}.} \textit{Let $E$ be a bundle of rank $r$ and slope $\mu < 1 - 2g$ over $C$. Then the following are equivalent.
\begin{enumerate}
\item[(1)] $E$ is semistable.
\item[(2)] For $1 \le n \le r-1$ and for $0 \le k < n \cdot \mu ( E^*) - (2g-1)$, the osculating space
\[ \Osc^k \left( \PP (\wedge^n E) , y \right) \ \subseteq \ | \cO_{\PP (\wedge^n E)}(1) \otimes \pi^* M |^* \]
is of the expected dimension $k \cdot \binom{r}{n}$ for all $y \in \PP ( \wedge^n E )$ and all $M \in \Pic^0 ( C )$.
\end{enumerate}}

\noindent Moreover, in {\S} \ref{CohomSt} we recall the notion of \emph{cohomological stability} introduced in \cite{EL}. In the sense of Theorem \ref{MainB}, cohomological stability is reflected more naturally than slope stability in the properties of the inflectional loci. This will be a subject of further study.

In {\S} \ref{dimInfl}, we give another application of the link given in Proposition \ref{ParamInfl} between inflectional loci and Quot schemes. Using familiar facts about Quot schemes of general bundles, we show that for a general scroll $S$ and general $M \in \Pic^0 ( C )$, the inflectional loci of $S \dashrightarrow |\cL_M|^*$ are all of the expected dimension (Theorem \ref{MainC}). The result proven in the appendix then shows the same is true for a projection of $S$ from a general centre in $|\cL_M|^*$ (Corollary \ref{IncompleteSystemsScrolls}). In particular, this confirms that the hypothesis of expected dimension in \cite[Theorem 2 and Corollary 1]{LMP} required for enumerating inflection points is satisfied when the parameters are chosen generally. Kleiman's transversality theorem \cite{Kle} is essential to the argument.

The plan of the paper is as follows. In {\S} \ref{OscIntro}, we recall background material on osculating spaces and inflectional loci of projective bundles. In {\S} \ref{SectionOscPpts} we obtain a description of the osculating spaces of $S \dashrightarrow |\cL_M|^*$ in terms of bundle-valued principal parts, which will be convenient for proofs. We can then prove Proposition \ref{ParamInfl}, which is the basis for Theorems \ref{MainA}, \ref{MainB} and \ref{MainC}.

In the appendix, we prove an auxiliary result required in {\S} \ref{dimInfl} which may be of independent interest: If $X$ is a smooth variety with a map to $\PP^n$ then, under a mild technical hypothesis, the inflectional loci behave as expected under general projections (Theorem \ref{IncompleteSystems}). This generalises a result in \cite{Pi1977} for curves, and also relies on Kleiman's theorem.

\subsection*{Acknowledgements} I thank Michael Hoff and Ragni Piene for helpful comments and discussions. I acknowledge gratefully a period of research leave supported by Oslo Metropolitan University in 2017--2018.

\subsection*{Notation} We work over an algebraically closed field $\K$ of characteristic zero. Throughout, $C$ denotes a projective smooth curve of genus $g \ge 1$, and $\Kc = T_C^*$ the canonical bundle of $C$. If $F \to C$ is a fibration, we write $F|_p$ for the fibre of $F$ at $p \in C$. If $D$ is a divisor on $C$, we abbreviate $V \otimes \Oc(D)$ to $V(D)$. If $E \to C$ is a vector bundle, we will occasionally abuse language by referring to a projective model $\psi \colon \PP E \dashrightarrow \PP^n$ as a ``scroll'' even though $\psi$ may not be an embedding.

\section{Osculating spaces and inflectional loci} \label{OscIntro}

In this section we recall basic definitions and facts, referring to \cite{LMP} and \cite{LM} for more detail. (Note that in these papers, ``$\PP V$'' denotes the projective space of hyperplanes in $V$, which we denote by $\PP V^*$ here.)

\subsection{Osculating spaces} \label{BackgroundOscSp} Let $X$ be a smooth projective variety and $\cL \to X$ a line bundle with nonempty linear system. Let $V \subseteq H^0 ( X, \cL )$ be a nonzero subspace of dimension $n+1$, and $\psi \colon X \dashrightarrow \PP V^* = \PP^n$ the natural map. For $k \ge 0$, we have the jet bundle $\cP^k (\cL)$, and the jet map
\[ j^k \colon \cO_X \otimes V \ \to \ \cP^k (\cL) \]
which sends a section of $\cL$ to its value modulo $\cI^{k+1}_x$ at each $x \in X$. This may be thought of as a truncated Taylor expansion.

\begin{definition} \label{DefnOsc} For $x \in X$ and $k \ge 0$, the \textsl{$k$th osculating space} $\Osc^k (X, x)$ is defined as
\[ \PP \Image \left( \left( j^k_x \right)^* \colon \cP^k(\cL)|_x^* \to V^* \right) \ \subseteq \ \PP V^* \ = \ \PP^n . \] %= \ \PP \Ker \left( V^* \to \left(V \cap H^0 ( X, \cL \otimes \cI_x^{k+1} ) \right)^* \right) . \]
Since $V \cap H^0 \left( X, \cL \otimes \cI_x^{k+1} \right) = \Ker ( j^k_x )$, we have also
\begin{equation} \Osc^k ( X, x ) %\ = \ \PP \Image \left( j^k_x \right)^* 
 \ = \ \PP \left( V \cap H^0 ( X, \cL \otimes \cI_x^{k+1} ) \right)^\perp. \label{DefnOscIm} \end{equation}
\noindent Clearly $\dim ( \Osc^k ( X, x ) )$ is lower semicontinuous in $x$. We write $d^k$ for the generic value of $\dim ( \Osc^k (X, x))$. \end{definition}

\begin{remark} \quad \label{DifferentialOperators} As $\Osc^0 ( X, x )$ is generated by the evaluation map $\ev_x \colon V \to \cL|_x$, it is simply the point $\psi(x)$. Furthermore, $\Osc^1 ( X, x )$ is the embedded tangent space of $\psi(X)$ at $\psi(x)$. In general, a section $s$ of $\cL$ vanishes to order at least $k$ at $x$ if and only if every differential operator of order at most $k$ at $x$ annihilates a local expression for $s$. Thus, by (\ref{DefnOscIm}) we see that $\Osc^k (X, x)$ is the subspace of $\PP H^0 (X, \cL)^*$ spanned by differential operators of order at most $k$ at $x$. Therefore,
\[ \dim \left( \Osc^k (X, x) \right) \ \le \ d^k \ \le \ \binom{\dim (X) + k}{\dim (X)} - 1 . \]
Note that the inequality on the right can be strict at the generic point; for example, if $X$ is a scroll as described in {\S} \ref{scrolls}, or a quadric fibration. \end{remark}

\begin{remark} If $\K = \C$ and we work with the classical topology, then \cite[II.3]{Pohl} gives another description of $\Osc^k (X, x)$. For each arc $A \subseteq X$ through $x$, the osculating space $\Osc^k ( A, x )$ is the limit of the secants spanned by $k$ distinct points of $A$ as these points approach $x$. Then $\Osc^k (X, x)$ is the span of the union of all $\Osc^k ( A, x )$ for all such $A$. \end{remark}

\begin{definition} \label{DefnInflLocus} Let $\psi \colon X \dashrightarrow \PP^n$ be as above. The \textsl{$k$th inflectional locus} $\Phi^k$ is defined by
\[ \Phi^k \ := \ \left\{ x \in X : \dim \left( \Osc^k ( X, x ) \right) \ < \ d^k \right\} \ = \ \left\{ x \in X : \rank \left( j^k_x \right) \ < \ d^k + 1 \right\} . \]
\end{definition}

\noindent In particular, $\Phi^k$ is a determinantal variety. Hence, if $\Phi^k$ is nonempty, then
\begin{equation} \codim ( \Phi^k , X ) \ \le \ n + 1 - d^k . \label{ExpCodim} \end{equation}
%By for example \cite[Chapter 2]{ACGH},
%\[ (\rank (\Oc \otimes H^0 ( \cL )) - d^k ) ( \rank ( \Im(j^k) - d^k) = (n + 1 - d^k )( d^k + 1 - d^k ) ) = \ n + 1 - d^k \]
%where $\dim (V) = n + 1$. 
Moreover, as there are natural surjections $\cP^{k+1} ( \cL ) \twoheadrightarrow \cP^k ( \cL )$ for $k \ge 0$, we have $\Phi^k \subseteq \Phi^{k+1}$.
%If $y \in \Phi^k$ then $\Image ( P^k|_y^* )$ is too small. Hence $\Image ( P^{k+1}|_y^* )$ is also too small and $y \in \Phi^{k+1}$.

\subsection{Scrolls over curves} \label{scrolls} Let $C$ be a projective smooth curve of genus $g \ge 1$, and $E$ a vector bundle of rank $r$ over $C$. Write $S := \PP E$ and let $\pi \colon S \to C$ be the projection. Let $\cL$ be the relative hyperplane bundle $\cO_{\PP E} (1) \to S$. If $M \to C$ is a line bundle, we write $\cL_M := \cO_{\PP E} (1) \otimes \pi^* M$ to ease notation. By the projection formula, $H^0 ( S, \cL_M ) \cong H^0 ( C, E^* \otimes M )$. Let us describe this identification explicitly in local coordinates.

Suppose $p \in C$ and $x \in S|_p$. Let $U$ be a neighbourhood of $p$ in $C$ over which $E$ and $M$ are trivial. Let $z$ be a uniformiser at $p$ and let $w_1$ be a section of $E \otimes M^{-1}$ near $p$ such that $w_1 (p)$ spans the line $x \in \PP (M^{-1} \otimes E|_p) = S|_p$. Complete $w_1$ to a frame $w_1 , \ldots , w_r$ for $(E \otimes M^{-1})|_U$ and let $\phi_1, \ldots , \phi_r$ be the dual frame for $E^* \otimes M|_U$. Then a section of $E^* \otimes M$ over $U$ is given by an expression
\begin{equation} \sum_{j \ge 0} z^j \cdot \left( \sum_{i=1}^r \alpha_{j, i} \cdot \phi_i \right) \label{SectionOfWd} \end{equation}
where the $\alpha_{j, i}$ are scalars. To view this as a section of $\cL_M$ near $x$, we note that the $\phi_i$ define homogeneous coordinates on the factor $\PP^{r-1}$ of $U \times \PP^{r-1} \cong S|_U$. We restrict to the open subset $\{ \phi_1 \ne 0 \}$ and set $u_i = \phi_i / \phi_1$ for $2 \le i \le r$. In the coordinates $\pi^* z, u_2, \ldots , u_r$ the point $x$ is $( 0, \ldots , 0 )$. Henceforth, we will abuse notation and write $z$ for the local function $\pi^* z$ on $S|_U$. On the set $\{ \phi_1 \ne 0 \}$, the above section (\ref{SectionOfWd}) can be written
\begin{equation} \phi_1 \cdot \sum_{j \ge 0} z^j \cdot \left( \alpha_{j, 1} + \sum_{i = 2}^r \alpha_{j, i} \cdot u_i \right) . \label{SectionOfL} \end{equation}
In the sequel, we will use the expressions (\ref{SectionOfWd}) and (\ref{SectionOfL}) repeatedly.

Furthermore, let us find the expected dimension of $\Phi^k$ in this case. By (\ref{SectionOfL}), clearly $\frac{\partial^2 s}{\partial u_i \partial u_{i'}} = 0$ for all $s \in H^0 ( X, \cL_M )$ and for $2 \le i, i' \le r$. Hence $\Osc^k ( S, x )$ is spanned by differential operators of order at most 1 in the $\frac{\partial}{\partial u_i}$; more precisely, by
\[ \ev_x , \frac{\partial}{\partial z} , \ldots , \frac{\partial^k}{\partial z^k} \quad \hbox{together with} \quad \frac{\partial^\ell}{\partial z^\ell} \frac{\partial}{\partial u_2} , \ldots , \frac{\partial^\ell}{\partial z^\ell} \frac{\partial}{\partial u_r} \quad \hbox{ for } 0 \le \ell \le k-1 . \]
In particular, $d^k \le kr$. If $V \subseteq H^0 ( S, \cL_M )$ is a fixed subspace of dimension $n+1$, set
\begin{equation} k' \ = \ k'_n \ := \ \max\{ k \ge 0 : kr \le n \} . \label{kOne} \end{equation}
Assuming $d^k = kr$ for $0 \le k \le k'$, by (\ref{ExpCodim}) the expected dimension of $\Phi^k$ is 
\begin{equation} \begin{cases} (k + 1)r - n - 1 \quad & \hbox{ if } k = k' ; \\
-1 & \hbox{ if } 0 \le k < k'. \end{cases} \label{ExpDim} \end{equation}

\begin{notation} \label{ntn} Until {\S} \ref{ScrollsIncSyst}, the linear system will always be complete; that is, $V = H^0 ( S, \cL_M )$. However, as $M$ may vary, we write $\Osc^k ( X, x ; \cL_M )$ and $\Phi^k ( \cL_M )$ and $d^k ( \cL_M )$ where necessary. \end{notation}

\section{Osculating spaces via principal parts} \label{SectionOscPpts}
	
Let $\psi \colon S \dashrightarrow | \cL_M |^*$ be as in {\S} \ref{scrolls}. We will describe the osculating spaces $\Osc^k ( S, x) = \Osc^k (S, x; \cL_M )$ using principal parts. As will be indicated below, this is a familiar approach for $k = 0$ and $k = 1$, and is convenient for proofs.

\subsection{Vector bundle-valued principal parts} %(See \cite[Chapter 3]{K1} for more information on the line bundle case, and \cite{Hit1} for higher rank.)
 For any locally free sheaf $F$ over $C$, we have a sequence of $\Oc$-modules
\begin{equation} 0 \ \to \ F \ \to \ \sRat (F) \ \to \ \sPrin (F) \ \to \ 0 \label{RatPrinSheafSeq} \end{equation}
where $\sRat(F)$ is the sheaf of rational sections of $F$, and $\sPrin(F) = \sRat(F)/F$ the sheaf of \textsl{principal parts with values in $F$}. We write $\Rat(F)$ and $\Prin(F)$ respectively for their groups of global sections. As both of these sheaves are flasque, there is an exact sequence
\begin{equation} 0 \ \to \ H^0 ( C, F ) \ \to \ \Rat (F) \ \to \ \Prin (F) \xrightarrow{\partial} H^1 (C, F) \ \to \ 0 . \label{cohomseq} \end{equation}
An element $q \in \Prin ( F )$ can be represented by a collection $( q_p : p \in C )$ where each $q_p$ is a germ of a rational section of $F$ near $p$, and $q_p$ is regular for all but finitely many $p$. We write $\overline{f}$ for the principal part of $f \in \Rat (F)$, and $\partial( q )$ for the class in $H^1 ( C, F )$ of $q \in \Prin (F)$. By exactness, $\partial ( \overline{f} ) = 0$ for all $f \in \Rat (F)$. When we need to specify $F$, we write $\partial_F$ for $\partial$.

\begin{caution} In the literature, the term ``principal part'' is sometimes used as a synonym for ``jet'', which is a different object from a section of $\sPrin (F)$. \end{caution}

\subsection{Motivation} \label{MotivationPptOsc} Let $E \to C$ be a vector bundle and $M \in \Pic^0 (C)$. As $C$ has dimension $1$, by Serre duality and the discussion in {\S} \ref{scrolls}, we have identifications
\begin{equation} H^1 ( C, \Kc M^{-1} \otimes E ) \ \cong \ H^0 ( C, E^* \otimes M )^* \ \cong \ H^0 ( S, \cL_M )^* . %\ = \ H^0 ( S, \cO_{\PP E} (1) \otimes \pi^* M )^* 
\label{SdPf} \end{equation}
Thus $\psi (S) \subseteq |\cL_M|^*$ can be regarded naturally a subvariety of $\PP H^1 ( C, \Kc M^{-1} \otimes E )$. This can be realised concretely using principal parts as follows. Let $p$ be a point of $C$ and $x \in S|_p$. As in {\S} \ref{scrolls}, let $w_1$ be a local section of $M^{-1} \otimes E$ such that $w_1 (p)$ spans the line $x \in \PP (M^{-1} \otimes E)|_p = S|_p$, and let $z$ be a uniformiser on $C$ at $p$. Then the point $\psi(x) = \Osc^0 ( S, x )$ is defined by the cohomology class
\[ \partial \left( \frac{dz \otimes w_1}{z} \right) \ \in \ H^1 ( C, \Kc M^{-1} \otimes E ) . \]
This approach was utilised in \cite{KS} for $E = \Oc = M$, and in \cite{CH2}, \cite{CH5} and elsewhere for bundles of higher rank. Generalising, we now use principal parts to describe $\Osc^k (S, x)$ directly as a subspace of $\PP H^1 (C, \Kc M^{-1} \otimes E )$ for all $k \ge 0$.

\subsection{Osculating spaces via principal parts}

We continue to use the notation of the last subsection. Moreover, as in {\S} \ref{scrolls}, we extend $w_1$ to a frame $w_1 , \ldots , w_r$ for $M^{-1} \otimes E$ near $p$, and let $\phi_1 , \ldots , \phi_r$ be the dual frame for $E^* \otimes M$. 

Now let $\Pi^k_x$ be the $\K$-linear span of the principal parts
\begin{equation} \frac{dz \otimes w_1}{z^{k+1}} \quad \hbox{and} \quad \frac{dz \otimes w_i}{z^j} : \ 1 \le i \le r ; \ 1 \le j \le k . \label{OscClasses} \end{equation}
It is not hard to see that $\Pi^k_x$ is a $\K$-vector subspace of $\Prin ( \Kc M^{-1} \otimes E )$ of dimension $kr + 1$, and that $\Pi^k_x$ is independent of the choice of frame and uniformiser. It depends on the line bundle $M$, but this will always be clear from the context. The coboundary map of (\ref{cohomseq}) restricts to a map $\partial \colon \Pi^k_x \ \to \ H^1 ( C, \Kc M^{-1} \otimes E)$.

\begin{proposition} \label{OscPpts} Via the identification (\ref{SdPf}), we have $\Osc^k ( S, x ) = \PP \left(\partial \left( \Pi^k_x \right) \right)$. \end{proposition}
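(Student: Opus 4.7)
The plan is to show, via the Serre duality identification (\ref{SdPf}), that the subspace $\partial ( \Pi^k_x ) \subseteq H^1 ( C , \Kc M^{-1} \otimes E ) \cong H^0 ( S , \cL_M )^*$ coincides with the annihilator $W_x^\perp$, where $W_x := H^0 ( S , \cL_M \otimes \cI_x^{k+1} )$. Under Notation \ref{ntn} we have $V = H^0 ( S , \cL_M )$, so (\ref{DefnOscIm}) reads $\Osc^k ( S , x ) = \PP ( W_x^\perp )$ and the proposition follows at once.

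The key input is the principal-parts description of Serre duality on $C$ (used as in {\S}\ref{MotivationPptOsc} and in \cite{KS}, \cite{CH2}, \cite{CH5}): for $\sigma \in H^0 ( C , E^* \otimes M )$ and $q = ( q_p ) \in \Prin ( \Kc M^{-1} \otimes E )$, the pairing of $\sigma$ with the class $\partial ( q )$ equals $\sum_{p \in C} \mathrm{Res}_p ( \sigma \wedge q_p )$, where $\sigma \wedge q_p$ is the $\Kc$-valued principal part obtained from the natural contraction $( E^* \otimes M ) \otimes ( \Kc M^{-1} \otimes E ) \to \Kc$. Applying this to the generators (\ref{OscClasses}) of $\Pi^k_x$ and using $\phi_l ( w_i ) = \delta_{l i}$ together with the local expansion (\ref{SectionOfWd}) of $\sigma$, one computes
\[
\big\langle \sigma ,\, \partial ( dz \otimes w_i / z^{j+1} ) \big\rangle \ = \ \mathrm{Res}_p \!\left( \sum_m \alpha_{m , i}\, z^{m - j - 1} \, dz \right) \ = \ \alpha_{j , i} .
\]

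The next step is to relate the coefficients $\alpha_{j , i}$ to partial derivatives at $x$ of the section $s \in H^0 ( S , \cL_M )$ corresponding to $\sigma$. From (\ref{SectionOfL}), and using that $\phi_1$ is a nonvanishing trivialisation of $\cL_M$ near $x = ( 0 , \ldots , 0 )$, the coefficient $\alpha_{j , 1}$ is a nonzero scalar multiple of $\frac{\partial^j s}{\partial z^j} ( x )$ for $0 \le j \le k$, and $\alpha_{j , i}$ is a nonzero scalar multiple of $\frac{\partial^{j+1} s}{\partial z^j \, \partial u_i} ( x )$ for $0 \le j \le k - 1$ and $2 \le i \le r$. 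The crucial point, already observed in {\S}\ref{scrolls}, is that $\frac{\partial^2 s}{\partial u_i \, \partial u_{i'}} \equiv 0$ on the chart, so every partial derivative of $s$ at $x$ of total order $\le k$ is (up to a nonzero scalar) one of these $kr + 1$ expressions. It follows that $\sigma$ pairs to zero against every generator of $\Pi^k_x$ if and only if $s \in W_x$.

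This gives $W_x = \partial ( \Pi^k_x )^\perp$ inside $H^0 ( C , E^* \otimes M )$. Since Serre duality is a perfect pairing, taking annihilators once more yields $W_x^\perp = \partial ( \Pi^k_x )$, completing the proof. The main obstacle is the residue bookkeeping in the middle step, but it is tamed by (\ref{SectionOfL}): the vanishing of all second and higher $u$-derivatives means exactly $k r + 1$ derivatives detect $( k + 1 )$-fold vanishing of $s$ at $x$, matching the dimension of $\Pi^k_x$ and pairing them off one-for-one with the generators (\ref{OscClasses}).
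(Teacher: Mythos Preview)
Your proof is correct and follows essentially the same strategy as the paper's: both establish the equality $H^0(S,\cL_M\otimes\cI_x^{k+1}) = \partial(\Pi^k_x)^\perp$ via the residue description of Serre duality and the local expansions (\ref{SectionOfWd})--(\ref{SectionOfL}), then conclude by taking annihilators. The only difference is organisational: you compute the pairing with each generator explicitly as a single coefficient $\alpha_{j,i}$ and match these to the surviving partial derivatives, whereas the paper argues the two inclusions separately by checking regularity of $\langle s,q\rangle$ for $s\in W_x$ and exhibiting a simple pole for $s\notin W_x$.
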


\begin{proof} By the identification $H^0 ( S, \cL_M ) \isom H^0 ( C, E^* \otimes M )$ described in {\S} \ref{scrolls}, Serre duality defines a perfect pairing
\[ H^0 ( S, \cL_M ) \times H^1 ( C, \Kc M^{-1} \otimes E ) \ \to \ H^1 ( C, \Kc ) \ = \ \K . \]
In view of (\ref{DefnOscIm}) with $V = H^0 ( S, \cL_M)$, and by linear algebra, it suffices to show that under this pairing, $H^0 ( S, \cL_M \otimes \cI_x^{k+1} )$ coincides with $\partial \left( \Pi^k_x \right)^\perp \subseteq H^1 ( C, \Kc M^{-1} \otimes E )^*$.

We will need the commutative diagram of possibly infinite dimensional $\K$-vector spaces
\[ \xymatrix{ H^0 ( C, E^* \otimes M ) \times \Prin ( \Kc M^{-1} \otimes E ) \ar[r]^-{\langle \: , \, \rangle} \ar[d]_{\Iden \times \partial_{\Kc M^{-1} \otimes E}} & \Prin ( \Kc ) \ar[d]_{\partial_{\Kc}} \\
H^0 ( C, E^* \otimes M ) \times H^1 ( C, \Kc M^{-1} \otimes E ) \ar[r]^-{\Sigma ( \: , \, )} & H^1 ( C, \Kc ) }  \]
where $\Sigma$ is the Serre duality pairing, and both $\Sigma$ and $\langle \: , \, \rangle$ are induced by the trace pairing $(E \otimes M) \otimes (E \otimes M)^* \to \Oc$, twisted by the canonical bundle $\Kc$.
%Let us now show that $\Sigma \left( H^0 ( S, \cL_M \otimes \cI_x^{k+1} ) , \partial ( \Pi^k_x ) \right) = 0$. 

Now suppose $s \in H^0 ( S, \cL_M \otimes \cI_x^{k+1} )$. In the local coordinates (\ref{SectionOfL}), the restriction of such an $s$ must be of the form
\[ \phi_1 \cdot z^k \cdot \left( \alpha_{k, 2} u_2 + \cdots + \alpha_{k, r} u_r \right) + \hbox{multiple of } z^{k+1} . \]
Viewing $s$ as a section of $E^* \otimes M \to C$ as in (\ref{SectionOfWd}), near $p$ we obtain the local expression
\[ z^k \cdot \left( \alpha_{k, 2} \phi_2 + \cdots + \alpha_{k, r} \phi_r \right) + \hbox{multiple of } z^{k+1} . \]
Now let $q = \frac{dz \otimes w}{z^\ell}$ be any element of $\Pi^k_x$. Here $1 \le \ell \le k+1$, and $w$ is an $M^{-1}\otimes E$-valued germ near $p$. If $\ell \le k$ then clearly $\langle s, q \rangle$ is everywhere regular, so is trivial in $\Prin ( \Kc )$. If $\ell = k+1$ then by definition of $\Pi^k_x$ we have $w \equiv \lambda w_1 \mod z$ for some $\lambda \in \K$. Since $\phi_i (w_1) = 0$ for $2 \le i \le r$, again $\langle s , q \rangle$ is regular. Thus in either case the cohomology class
\[ \Sigma \left( s , \partial_{\Kc M^{-1} \otimes E} (q) \right) \ = \ \partial_\Kc \langle s, q \rangle \ \in \ H^1 (C, \Kc) \]
is zero for all $q \in \Pi^k_x$. Therefore, $H^0 ( S, \cL_M \otimes \cI_x^{k+1} ) \subseteq \partial ( \Pi^k_x )^\perp$.

%Let us now show that $H^0 ( S, \cL_M \otimes \cI_x^{k+1} )$ is the largest space annihilated by $\delta(\Pi^k_x)$.
Conversely, suppose $s \not\in H^0 ( S, \cL_M \otimes \cI_x^{k+1} )$; equivalently, $j^k_x (s) \ne 0$. Viewing $s$ as a section of $E^* \otimes M \to C$ as above, the local expression near $p$ is
\[ z^\ell f_\ell + \hbox{multiple of } z^{\ell + 1} \]
where $f_\ell$ is a nonzero linear combination of $\phi_1 , \ldots , \phi_r$. Since $j^k_x (s)$ is nonzero, $\ell \le k$, and if $\ell = k$ then $f_\ell ( w_1 ) \ne 0$. If $\ell = k$ then set $w = w_1$. Otherwise, let $w$ be any linear combination of $w_1 , \ldots , w_r$ such that $f_\ell (w) \ne 0$. Then the principal part $\frac{dz \otimes w}{z^{\ell + 1}}$ belongs to $\Pi^k_x$. We compute
\[ \left\langle s, \frac{dz \otimes w}{z^{\ell + 1}} \right\rangle \ = \ f_{\ell}(w) \cdot \frac{dz}{z} \ \in \ \Prin ( \Kc ) . \]
As $f_{\ell} (w) \ne 0$, this has a pole of order exactly $1$ at $p$, so defines a nonzero class in $H^1 ( C, \Kc )$ since $g \ge 1$. Hence $s \not\in \partial ( \Pi^k_x )^\perp$.

This establishes equality $H^0 ( S, \cL_M \otimes \cI_x^{k+1} ) = \partial ( \Pi^k_x )^\perp$, which completes the proof. \end{proof}

\begin{remark} The notation could be simplified by working with the spaces $H^1 ( C, E \otimes M )$ instead of $H^1 ( C, \Kc M^{-1} \otimes E )$. The reason we have not done so is that we wish to obtain scrolls in $\PP H^0 ( C, E^* \otimes M )^* = \PP H^0 ( S, \cO_{\PP E} (1) \otimes \pi^* M )^*$, in order to maintain the connection with \cite{LMP} and other works on this topic. \end{remark}

\subsection{Osculating spaces and elementary transformations}

This logically independent subsection is included to show the connection between the Proposition \ref{OscPpts} and some existing descriptions of the embedded tangent spaces to models of other fibrations over $C$.

\begin{corollary} \label{OscET} For $x \in S|_p$, let $0 \to E \to \tW \to \K_p \to 0$ be the elementary transformation such that $\Ker ( E|_p \to \tW|_p )$ is the line $x$. Then $\Osc^k ( S, x )$ coincides with the projectivised image of the coboundary map in
\[ H^0 ( C, \Kc M^{-1} \otimes \tW (kp) ) \ \to \ H^0 \left( C, \frac{\Kc M^{-1} \otimes \tW (kp)}{\Kc M^{-1} \otimes E} \right) \ \xrightarrow{\partial} \ H^1 ( C, \Kc M^{-1} \otimes E ) . \] \end{corollary}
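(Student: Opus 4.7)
My plan is to apply Proposition \ref{OscPpts} by showing that the image of the coboundary map equals $\partial(\Pi^k_x)$. The key is to identify $H^0(C, Q)$ (where $Q := (\Kc M^{-1} \otimes \tW(kp))/(\Kc M^{-1} \otimes E)$) with the subspace $\Pi^k_x$ of $\Prin(\Kc M^{-1} \otimes E)$.

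First I would observe that $\tW(kp)/E$ is a skyscraper sheaf supported at $p$, so $Q$ is also a skyscraper at $p$ and $H^0(C,Q)$ coincides with the stalk $Q_p$. This stalk embeds naturally in $\Prin(\Kc M^{-1} \otimes E)$ via the inclusion of sheaves $Q \hookrightarrow \sPrin(\Kc M^{-1} \otimes E)$ induced from $\Kc M^{-1} \otimes \tW(kp) \hookrightarrow \sRat(\Kc M^{-1} \otimes E)$. Under this embedding, the coboundary $\partial \colon H^0(C,Q) \to H^1(C, \Kc M^{-1} \otimes E)$ from the long exact sequence of $0 \to \Kc M^{-1}\otimes E \to \Kc M^{-1} \otimes \tW(kp) \to Q \to 0$ is simply the restriction of the global coboundary in \eqref{cohomseq}.

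Next I would compute $Q_p$ in the local frame of {\S} \ref{SectionOscPpts}. With $w_1,\ldots,w_r$ a local frame for $M^{-1}\otimes E$ near $p$ such that $w_1(p)$ spans $x$, the defining property of the elementary transformation $0 \to E \to \tW \to \K_p \to 0$ with $\Ker(E|_p \to \tW|_p)$ equal to $x$ gives that $\tW$ is locally generated by $z^{-1}w_1, w_2, \ldots, w_r$ (as sections of $\sRat(E)$). Hence $\Kc M^{-1} \otimes \tW(kp)$ is locally generated over $\Oc$ by
\[ \frac{dz \otimes w_1}{z^{k+1}}, \ \frac{dz \otimes w_2}{z^k}, \ \ldots, \ \frac{dz \otimes w_r}{z^k}. \]
Expanding an arbitrary local section as $\sum_{i=1}^{r} f_i(z) \cdot \frac{dz \otimes w_i}{z^{k_i}}$ (with $k_1 = k+1$ and $k_i = k$ for $i \ge 2$) and taking principal parts modulo $\Kc M^{-1} \otimes E$ extracts precisely the negative-power coefficients, which span the set of generators \eqref{OscClasses} of $\Pi^k_x$. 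A dimension count ($(k+1) + (r-1)k = kr+1$) confirms equality, so $H^0(C,Q) = \Pi^k_x$ as subspaces of $\Prin(\Kc M^{-1}\otimes E)$.

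The two steps above show $\partial(H^0(C,Q)) = \partial(\Pi^k_x)$ inside $H^1(C, \Kc M^{-1}\otimes E)$. Projectivising and invoking Proposition \ref{OscPpts} then yields the identification with $\Osc^k(S,x)$, completing the proof. I do not anticipate a real obstacle here: the only slightly delicate point is fixing the convention for the elementary transformation so that the local generator $z^{-1}w_1$ of $\tW$ corresponds to the distinguished pole $\frac{dz \otimes w_1}{z^{k+1}}$ in $\Pi^k_x$, i.e.\ making sure that the line $x \subset E|_p$ and the local frame $w_1$ are compatibly chosen.
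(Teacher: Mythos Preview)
Your argument is correct and follows essentially the same route as the paper: realise $\tW$ locally as generated by $z^{-1}w_1, w_2,\ldots,w_r$, compute that the quotient $H^0\bigl(C,(\Kc M^{-1}\otimes\tW(kp))/(\Kc M^{-1}\otimes E)\bigr)$ is exactly $\Pi^k_x$, and then invoke Proposition~\ref{OscPpts}. Your version is somewhat more explicit about why the coboundary of the short exact sequence agrees with the restriction of the global $\partial$ from~\eqref{cohomseq}, which is a welcome clarification but not a different idea.
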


\begin{proof} The elementary transformation $\tW$ can be realised as the subsheaf of $\sRat (E)$ of sections regular away from $p$ and with at most simple poles at $p$ in the direction of $w_1$. Thus $M^{-1} \otimes \tW_p$ is spanned by $\frac{w_1}{z}, w_2 , \ldots , w_r$, and $M^{-1} \otimes \tW ( kp )_p$ is spanned by
\[ \frac{w_1}{z^{k+1}}, \frac{w_2}{z^k}, \ldots , \frac{w_r}{z^k} . \]
It follows from the description (\ref{OscClasses}) that $H^0 \left( C, \frac{\Kc M^{-1} \otimes \tW (kp)}{\Kc M^{-1} \otimes E} \right)$ is exactly $\Pi^k_x$. Hence the corollary follows from Proposition \ref{OscPpts}. \end{proof}

\begin{remark} \label{ConnectionToLiterature} Corollary \ref{OscET} is a result of the same type as \cite[Lemma 5.3]{CH1}, \cite[Lemma 3.5]{CH2} and \cite[Lemma 3.2]{CH5} %(see also \cite[Proposition 4.2]{Hit17}), 
 which describe the embedded tangent spaces to various quadric and Grassmannian fibrations over $C$ in terms of elementary transformations. (The applications to Segre invariants studied in these papers are however of a different nature to that in {\S} \ref{SectionSegre}.) \end{remark}

\section{Quot schemes and inflectional loci} \label{SectionQuotInfl}

\subsection{Motivation} We will now use Quot schemes to describe inflectional loci of scrolls. To motivate the use of Quot schemes in this context, we note that in the situation of Proposition \ref{OscPpts}, the osculating space $\Osc^k ( S, x )$ fails to be of dimension $kr$ if and only if
\[ \partial \left( \frac{dz \otimes w}{z^\ell} \right) \ = \ 0 \ \in \ H^1 ( C, \Kc M^{-1} \otimes E ) \]
for certain $\ell$ and $w$. By exactness of (\ref{cohomseq}), the principal part $\frac{dz \otimes w}{z^\ell}$ arises from a global section of $H^0 ( C, \Kc M^{-1} \otimes E ( \ell p ))$, giving a sheaf injection $\Oc ( - \ell p ) \to \Kc M^{-1} \otimes E$. Thus the existence of inflection points implies the existence of invertible subsheaves of the form $\Oc ( - \ell p ) \subset \Kc M^{-1} \otimes E$; and the latter define points in a Quot scheme of $\Kc M^{-1} \otimes E$. In the following we will make this correspondence precise.

\subsection{Quot schemes of invertible subsheaves} Let $E$ be a vector bundle of rank $r$ and degree $d$ over $C$, and let $M$ be a line bundle of degree zero. For any integer $\ell$, invertible subsheaves of degree $\ell$ of $\Kc M^{-1} \otimes E$ are in canonical bijection with coherent quotients of rank $r-1$ and degree
\[ \deg ( \Kc M^{-1} \otimes E ) - \ell \ = \ d + 2r(g-1) - \ell . \]
Such quotients are parametrised by the Quot scheme $\Quot^{r-1, \, d + 2r(g-1) - \ell} ( \Kc M^{-1} \otimes E )$. To ease notation, and to emphasise that we are primarily interested in subsheaves, we denote this by $Q_{\ell} ( \Kc M^{-1} \otimes E )$. For points of this scheme we write $[ \sigma \colon L \to \Kc M^{-1} \otimes E ]$, where $\sigma$ is a sheaf injection and $L \in \Pic^\ell (C)$. If there is no ambiguity, we may simply write $\sigma$.

%To ease notation, set $\ell := -(k+1)$.
\subsection{A parameter space} For any integer $\ell$, let $a_\ell \colon C \to \Pic^\ell (C)$ be the map $p \mapsto \Oc ( \ell p )$, and
\[ b \colon Q_\ell ( \Kc M^{-1} \otimes E ) \ \to \ \Pic^{\ell} (C) \]
the forgetful map $[ \sigma \colon L \to \Kc M^{-1} \otimes E ] \ \mapsto \ L$. We have the fibre product
\begin{equation} \label{defnRkM} \xymatrix{ R^k_M \ar[rr] \ar[d] & \quad & C \ar[d]^{a_{-(k+1)}} \\
 Q_{-(k+1)}(\Kc M^{-1} \otimes E) \ar[rr]^-{b} & & \Pic^{-(k+1)} ( C ) . } \end{equation}
Set-theoretically, $R^k_M$ is the set of pairs of the form
\[ \left( p, \left[ \sigma \colon \Oc ( -(k+1)p ) \to \Kc M^{-1} \otimes E \right] \right) . \]
%By definition of the Quot scheme and since $\rank ( \Oc ( -(k+1) p) ) = 1$, the fibre of $R^k_M$ over $p \in C$ is $\PP H^0 ( C, \Hom ( \Oc ( -(k+1) p ) , \Kc M^{-1} \otimes E ))$. On the other hand, the fibre over $[ \sigma \colon \Oc ( -(k+1) p ) \to \Kc M^{-1} \otimes E ]$ is
%\[ \{ q \in C : \Oc ( (k+1) q ) \cong \Oc ( (k+1) p ) \} . \]
We define a map $e_k \colon R^k_M \dashrightarrow \PP E =: S$ by
\[ \left( p, \left[ \sigma \colon \Oc ( -(k+1) p ) \to \Kc M^{-1} \otimes E \right] \right) \ \mapsto \ \Image ( \sigma|_p ) \ \in \ S|_p . \]
This is defined at $(p, \sigma)$ if and only if $\sigma$ is a vector bundle injection at $p$. (Note that $e_k$ also depends on $M$, but $M$ will always be clear from the context.)

\begin{proposition} \label{ParamInfl} Let $E$ be a bundle of rank $r$ and degree $d$. Let $x$ be a point of a fibre $S|_p$. For $k \ge 0$ and $M \in \Pic^0 (C)$, the following holds. \label{ImageOfE}
\begin{enumerate}
\item[(a)] If $x \in e_k ( R^k_M )$ then $\dim ( \Osc^k ( S, x ; \cL_M ) ) < kr$.
\item[(b)] If $\dim ( \Osc^k ( S, x ; \cL_M ) ) < kr$, then $x \in \Image ( e_k )$ or $\Image ( e_\ell ) \cap S|_p$ is nonempty for some $\ell < k$. %In particular, if $k = 0$ then $x \in \Image ( e_0 )$.
\end{enumerate} \end{proposition}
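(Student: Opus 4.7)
The plan is to combine Proposition \ref{OscPpts} with the exact sequence (\ref{cohomseq}) to translate the drop $\dim \Osc^k ( S, x ; \cL_M ) < kr$ into the existence of sheaf injections of the form $\Oc ( - a p ) \hookrightarrow \Kc M^{-1} \otimes E$, which are precisely the points of the Quot schemes $Q_{-a} ( \Kc M^{-1} \otimes E )$ that together make up the spaces $R^{a-1}_M$. Indeed, by Proposition \ref{OscPpts}, $\dim \Osc^k ( S, x ; \cL_M ) = kr$ if and only if $\partial$ is injective on $\Pi^k_x$ (which has dimension $kr+1$). So for both parts the real task is to match nonzero elements of $\Ker ( \partial|_{\Pi^k_x} )$ with points of the parameter spaces $R^a_M$ via the exactness of (\ref{cohomseq}).

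For part (a), I would start from a point $( p, [ \sigma \colon \Oc ( -(k+1)p ) \to \Kc M^{-1} \otimes E ] )$ of $R^k_M$ at which $e_k$ is defined and $e_k ( p, \sigma ) = x$. Such a $\sigma$ corresponds to a global section of $\Kc M^{-1} \otimes E ( (k+1) p )$, equivalently a rational section $\tilde q$ of $\Kc M^{-1} \otimes E$ that is regular off $p$. Since $e_k$ is defined at $(p, \sigma)$, the vector $\sigma|_p$ is nonzero, so $\tilde q$ has pole of exact order $k+1$ at $p$. Reading off the local expansion in the frame $w_1 , \ldots , w_r$ of {\S}\ref{scrolls}, the hypothesis $e_k ( p, \sigma ) = x$ forces the $z^{-(k+1)} dz$ coefficient to point along $w_1$. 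Hence the principal part $\overline{\tilde q}$ is a nonzero element of $\Pi^k_x$, and $\partial ( \overline{\tilde q} ) = 0$ by exactness, so $\partial|_{\Pi^k_x}$ is not injective.

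For part (b), I would reverse this construction. Let $q$ be a nonzero element of $\Ker ( \partial|_{\Pi^k_x} )$, and let $a \ge 1$ be the exact order of the pole of $q$ at $p$; by the description (\ref{OscClasses}) of the generators of $\Pi^k_x$, we have $a \le k+1$. By exactness of (\ref{cohomseq}), $q$ lifts to a rational section $\tilde q$ of $\Kc M^{-1} \otimes E$, regular off $p$ and with pole of exact order $a$ at $p$. This lift corresponds to a sheaf injection $\sigma_q \colon \Oc ( -a p ) \hookrightarrow \Kc M^{-1} \otimes E$, i.e.\ a point $(p, \sigma_q ) \in R^{a-1}_M$; and because $a$ is the exact pole order, $\sigma_q$ is a vector bundle injection at $p$, so $e_{a-1}$ is defined at $(p, \sigma_q )$. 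When $a = k+1$, the defining form (\ref{OscClasses}) of $\Pi^k_x$ forces the leading coefficient of $\tilde q$ to lie along $w_1$, so $e_k ( p, \sigma_q ) = x$ and hence $x \in \Image ( e_k )$. When $a \le k$, the point $e_{a-1} ( p, \sigma_q ) \in S|_p$ witnesses that $\Image ( e_{a-1} ) \cap S|_p$ is nonempty with $a - 1 < k$.

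The main obstacle will be the bookkeeping between pole orders and Quot-scheme indices, in particular the shift coming from the convention that $R^a_M$ parametrises inclusions with source $\Oc ( - (a+1) p )$. A more delicate point is checking the boundary case $a = k+1$ in part (b): one needs to use the explicit description (\ref{OscClasses}) to see that the only generators of $\Pi^k_x$ with a pole of order $k+1$ are scalar multiples of $dz \otimes w_1 / z^{k+1}$, so that the leading term of $\tilde q$ really does lie in the direction of $w_1$, as required to conclude $e_k ( p, \sigma_q ) = x$ rather than merely $e_k ( p, \sigma_q ) \in S|_p$.
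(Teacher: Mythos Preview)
Your proposal is correct and follows essentially the same route as the paper's own proof: both directions hinge on Proposition~\ref{OscPpts} and the exactness of (\ref{cohomseq}), translating a nonzero element of $\Ker(\partial|_{\Pi^k_x})$ into a global rational section of $\Kc M^{-1}\otimes E$ with a pole of controlled order at $p$, and reading off the resulting sheaf injection as a point of some $R^\ell_M$. Your handling of the boundary case (pole order exactly $k+1$ forces the leading term along $w_1$, hence $e_k(p,\sigma_q)=x$) is precisely the observation the paper makes, and your index shift $a=\ell+1$ matches the paper's convention.
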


\begin{proof} (a) Suppose $x = e_k ( p, \sigma )$ for some $( p, \sigma ) \in R^k_M$. Then
\[ \sigma \ \in \ H^0 ( C, \Hom ( \Oc( -(k+1) p ) , \Kc M^{-1} \otimes E )) \ = \ H^0 ( C, \Kc M^{-1} \otimes E ( (k+1)p ) ) \]
is a sheaf injection which is saturated at $p$. We view $\sigma$ as a rational section of $\Kc M^{-1} \otimes E$ with a pole of order at most $k+1$ at $p$. Since $\sigma$ is saturated, in fact the pole has order exactly $k+1$. Hence the principal part $\overline{\sigma}$ is of the form
\[ \frac{dz \otimes w}{z^{k+1}} \]
for some local section $w$ of $M^{-1} \otimes E$ which is nonzero at $p$, and such that
\[ x \ = \ \Image( \sigma|_p ) \ \in \ \PP (\Kc M^{-1} \otimes E)|_p \ = \ S|_p , \]
so $x$ is the line spanned by $w (p)$. Thus $\overline{\sigma}$ is a nonzero element of $\Pi^k_x$ (cf.\ \ref{OscClasses}). The exactness of (\ref{cohomseq}) implies that $\partial (\overline{\sigma}) = 0$ in $H^1 ( C, \Kc M^{-1} \otimes E)$. By Proposition \ref{OscPpts}, the osculating space $\Osc^k ( S, x ; \cL_M )$ has dimension strictly less than $\dim ( \PP ( \Pi^k_x ) ) = kr$. %Hence $e^k ( p, \sigma )$ belongs to $\Phi^k ( \cL_M )$.

(b) Suppose $\dim ( \Osc^k ( S, x ; \cL_M ) ) < kr$. By Proposition \ref{OscPpts}, there exists a nonzero
\[ q \ \in \ \Ker \left( \partial \colon \Pi^k_x \ \to \ H^1 ( C, \Kc M^{-1} \otimes E ) \right) . \]
Such a $q$ has the form $\frac{dz \otimes w}{z^{\ell + 1}}$, where $0 \le \ell \le k$ and $w$ is a local section of $M^{-1} \otimes E$ which is nonzero at $p$. By exactness of (\ref{cohomseq}), there exists
\[ \sigma \ \in \ H^0 \left( C, \Kc M^{-1} \otimes E ((\ell+1)p) \right) \ = \ H^0 ( C, \Hom ( \Oc ( -(\ell+1) p ) , \Kc M^{-1} \otimes E )) \]
with $\overline{\sigma} = q$, which is a vector bundle injection at $p$. Let $x' \in S|_p$ be the line spanned by $w(p)$. Then
\[ x' \ = \ \Image ( \sigma (p) ) \ = \ e_{\ell} \left( p, [ \sigma \colon \Oc ( - (\ell+1)p ) \to \Kc M^{-1} \otimes E ] \right) . \]
If $\ell = k$ then by definition of $\Pi^k_x$ the section $w$ spans the line $x$ at $p$, and $x' = x = e_k ( p, \sigma )$. If $\ell < k$ then $x' \in S|_p \cap \Image ( e_\ell )$, as desired. \end{proof}

%In particular, since $q$ is nonzero in $\Prin ( \Kc M^{-1} \otimes E )$, if $k = 0$ then necessarily $m = 1$. %From this and part (a) it follows that $\Image (e^0)$ is exactly $\Phi^0 ( \cL_M )$, the base locus of $|\cL_M|$.

\begin{comment} %Other approach:
\begin{enumerate}
\item[(a)] If $x \in e_k ( R^k_M )$ then $\partial \colon \Pi^k_x  \to H^1 ( C, \Kc M^{-1} \otimes E )$ is not injective.
\item[(b)] If $\partial \colon \Pi^k_x \to H^1 ( C, \Kc M^{-1} \otimes E )$ is not injective, then either $x \in \Image ( e_k )$ or $\partial \colon \Pi^{k-1}_{x'} \to H^1 ( C, \Kc M^{-1} \otimes E )$ is not injective for some $x' \in S|_p$. In particular, if $k = 0$ then $x \in \Image ( e_0 )$.
\end{enumerate}

\begin{proposition} Let $x$ be a point of $S$, and write $p = \pi(x)$.
\begin{enumerate}
\item[(a)] Suppose $x \in \Image ( e_k )$. Then $\dim ( \Osc^k ( S, x ) ) < kr$.
\item[(b)] Suppose $\dim ( \Osc^k ( S, x ) ) < kr$. Then $x \in \Image( e_k )$ or $\Image ( e_{\ell} ) \cap S|_p$ is nonempty for some $\ell < k$.
\end{enumerate}
\end{proposition}
\end{comment}

\begin{caution} We may have $R^k_M \ne \emptyset$ even if $\dim ( \Osc^k ( S, x ; \cL_M ) ) = kr$ for all $x$. If
\begin{equation} H^0 ( C, \Kc M^{-1} \otimes E ) \ \cong \ H^0 ( C, \Kc M^{-1} \otimes E ( (k+1)p )) \hbox{ for all } p \in C , \label{StrangeInflection} \end{equation}
then $\partial \colon \Pi^k_x \to H^1 ( C, \Kc M^{-1} \otimes E )$ is injective for all $x \in S$, and $\dim ( \Osc^k ( S, x ; \cL_M ) ) = kr$ by Proposition \ref{OscPpts}. But in this situation, if $s$ is a nonzero section of $\Kc M^{-1} \otimes E$, then $R^k_M$ contains the point
\[ \left( p, \left[ \sigma \colon \Oc ( - (k+1)p ) \to \Oc \xrightarrow{s} \Kc M^{-1} \otimes E \right] \right) . \]
This does not contradict Proposition \ref{ParamInfl}, because $(p, \sigma)$ is a point of indeterminacy for $e_k$; by (\ref{StrangeInflection}), the map $\sigma$ is not saturated at $p$. The distinction between nonemptiness of $R^k_M$ and that of the image $e_k ( R^k_M )$ is significant. Clearly this situation can arise only if $\Kc M^{-1} \otimes E$ is \emph{special}; that is, both $h^0 ( C, \Kc M^{-1} \otimes E )$ and $h^1 ( C, \Kc M^{-1} \otimes E )$ are nonzero.

An example of such a $E$ for $k = 0$ can be given as follows. Suppose $C$ is general of genus $g \ge 4$, and set $M = \Oc$. Let $N \in \Pic^{-(g+1)}(C)$ be general, and suppose $L \in \Pic^{-(g+2)} (C)$ satisfies $h^0 ( C, \Kc L ) = 1$ and $|L^{-1}|$ base point free. Then one can check that any extension $0 \to L \to E \to N \to 0$ satisfies $h^0 ( C, \Kc \otimes E ) = h^0 ( C, \Kc \otimes E (p)) \ne 0$ for \emph{all} $p \in C$.

\begin{comment}
Details: $\Kc N \in \Pic^{g-3} (C)$. Since $C$ is general (Petri), $\dim ( W^0_{g-2} ) = g - 2$, so it does not meet the curve
\[ \{ \Kc N (p) : p \in C \} \ \subset \ \Pic^{g-2} (C) \]
for general enough $N$. Hence $h^0 ( \Kc N (p) ) = h^0 ( \Kc N ) = 0$ for all $p \in C$.

Next, $\deg ( \Kc L ) = 2g-2-g-2 = g-4$. Suppose $\Kc L \in W^0_{g - 4}$. Then $h^1 ( \Kc L ) = h^0 ( L^{-1} ) = 4$. We want $h^0 ( \Kc L (p) ) = 1$; that is, $h^1 ( \Kc L (p) ) = h^0 ( L^{-1} (-p) ) = 3$, for all $p \in C$. If $L^{-1} \in W^3_{g+2}$ is base point free then this is assured.

We need $g \ge 4$ since $C$ is general, to ensure that $W^0_{g-4}$ is nonempty. If $g = 4$ we can take $L = T_C$, as $\Kc$ is base point free. Here $g + 2 = 2g - 2$ as $4 + 2 = 2 \cdot 4 - 2$.

Now consider the diagram
\[ \xymatrix{ & 0 \ar[d] & 0 \ar[d] & 0 \ar[d] & \\
0 \ar[r] & H^0 ( \Kc L ) \ar[r] \ar[d] & H^0 ( \Kc \otimes E ) \ar[r] \ar[d] & H^0 ( \Kc N ) \ar[r] \ar[d] & \cdots \\
0 \ar[r] & H^0 ( \Kc L (p) ) \ar[r] \ar[d] & H^0 ( \Kc \otimes E (p) ) \ar[r] \ar[d] & H^0 ( \Kc N (p) ) \ar[r] \ar[d] & \cdots \\
 & \vdots & \vdots & \vdots & } \]
As $H^0 ( \Kc N ) = H^0 ( \Kc N (p) ) = 0$ and $h^0 ( \Kc L ) = h^0 ( \Kc L (p) ) = 1$ for all $p \in C$, by exactness we have 
\[ h^0 ( \Kc \otimes E ) \ = \ h^0 ( \Kc L ) \ = \ h^0 ( \Kc L (p) ) \ = \ h^0 ( \Kc \otimes E (p) ) \ = \ 1 \]
for all $p \in C$.
\end{comment}
%\fix{A more compact (but less useful) statement of Proposition \ref{ParamInfl} is that $R^k_M$ is nonempty if and only if $\codim ( \Osc^k ( S, x ; \cL_M ) ) > \chi( E ) - kr$.}
\end{caution}

\section{Segre invariant and dimensions of osculating spaces} \label{SectionSegre}

In this section, we will give the first application of Proposition \ref{ParamInfl}, linking the degrees of invertible subsheaves of a bundle $E \to C$ to the dimensions of osculating spaces to $S = \PP E$. We use the language of Segre invariants, as defined in (\ref{defnSegreInv}). This will lead to a geometric characterisation of the semistability and cohomological stability properties. Let us first state an important result on Segre invariants.

Taking direct sums of line bundles, it is easy to find a bundle $E$ with $s_n (E)$ arbitrarily small. Conversely, however, Hirschowitz \cite{Hir2} determined the following upper bound on $s_n (E)$, valid for all $E$ (see also \cite{CH1}).
\begin{theorem} \label{HirschBound} Let $E$ be a bundle of rank $r \ge 2$ and degree $d$. For $1 \le n \le r - 1$, we have $s_n (E) \le n (r-n)(g-1) + \delta$, where $\delta \in \{ 0 , \ldots , r-1 \}$ satisfies $n(r-n)(g-1) + \delta \equiv nd \mod r$. \end{theorem}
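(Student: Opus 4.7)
The plan is to bound $s_n(E)$ from above by combining a deformation-theoretic dimension count on Quot schemes of subbundles with an explicit construction on a decomposable bundle, and then to extend the bound to every $E$ via semi-continuity.

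Specifically, for a rank-$n$ subbundle $F \subset E$ of degree $f$, the corresponding rank-$(r-n)$, degree-$(d-f)$ quotients of $E$ form the Quot scheme $\Quot^{r-n, d-f}(E)$, and at $[E \twoheadrightarrow E/F]$ the Zariski tangent and obstruction spaces are $H^0(C, F^* \otimes E/F)$ and $H^1(C, F^* \otimes E/F)$ respectively. By standard deformation theory every component through this point has dimension at least
\[ \chi(F, E/F) \;=\; nd - rf - n(r-n)(g-1). \]
I would take $f$ as large as possible subject to this being non-negative; since $nd - rf \equiv nd \pmod{r}$, this forces $nd - rf = n(r-n)(g-1) + \delta$ with $\delta \in \{0, \ldots, r-1\}$ uniquely determined exactly as in the statement. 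The bound $s_n(E) \le n(r-n)(g-1) + \delta$ then reduces to exhibiting, for $E$ in a Zariski-open set, a rank-$n$ subbundle of degree $f$.

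For the existence, I would first work over a decomposable bundle $E_0 = L_1 \oplus \cdots \oplus L_r$, choosing line bundle degrees so that $\sum_{i=1}^{r} \deg L_i = d$ and $\sum_{i=1}^{n} \deg L_i = f$; this is always possible since the $\deg L_i$ are free integers subject to a single sum constraint. Then $F_0 := L_1 \oplus \cdots \oplus L_n \subset E_0$ is a rank-$n$ subbundle of degree $f$, yielding an explicit point in the Quot scheme. To propagate this to a generic bundle, I would consider the relative Quot scheme $\pi \colon \mathcal{Q} \to B$ over a parameter space $B$ for bundles of rank $r$ and degree $d$, and show that $\pi$ is dominant by a dimension comparison via the natural forgetful map $\mathcal{Q} \to M_{n,f} \times M_{r-n, d-f}$, taking $(E, F) \mapsto (F, E/F)$, whose fibres parametrise extensions $0 \to F \to E \to E/F \to 0$ with dimension governed by $\mathrm{Ext}^1(E/F, F)$.

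Finally, the set $\{E \in B : s_n(E) \le n(r-n)(g-1) + \delta\}$ is closed in $B$ by properness of the Quot scheme; having shown it contains a non-empty open subset and (after restricting to a suitable component) $B$ being irreducible, it must be all of $B$, yielding the bound for every $E$. The main obstacle is the dominance step: a subbundle constructed on a decomposable $E_0$ need not extend to nearby bundles, and one must use $\chi(F, E/F) \ge 0$ as an unobstructedness hypothesis together with the extension-theoretic dimension count to force $\pi$ to surject. This is the essential content of Hirschowitz's original argument, and also why the bound holds universally rather than only generically.
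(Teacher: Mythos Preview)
The paper does not prove this theorem; it is quoted from Hirschowitz \cite{Hir2} (with a pointer also to \cite{CH1}) and used as a black box in later arguments. So there is no proof in the paper to compare against.

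Your sketch is essentially the strategy of those references: bound the local dimension of the Quot scheme from below by $\chi(F, E/F)$, choose $f$ maximal subject to $\chi \ge 0$ (which forces $nd - rf = n(r-n)(g-1) + \delta$), exhibit a point over a decomposable bundle, argue dominance of the relative Quot over a parameter space for all bundles, and then close up by properness of Quot. Two places deserve care if you fill this in. First, the parameter space $B$ must be an \emph{irreducible} family through which \emph{every} bundle of rank $r$ and degree $d$ passes --- for instance an open subset of a Quot scheme of a trivial bundle of large rank, not the moduli space of stable bundles --- since the statement is for all $E$, not only stable or general ones. Second, the dominance step is where the actual content lies: the extension-space count you allude to has to be made precise enough to compare $\dim \mathcal{Q}$ with $\dim B$, and this is exactly where the particular value of $f$ (equivalently of $\delta$) enters. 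Those are precisely the points Hirschowitz's argument addresses.
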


\noindent We now give a characterisation of $s_1 (E)$ using osculating spaces.

\begin{theorem} \label{MainA} Suppose $k \ge 0$, and let $E$ be a bundle of rank $r$ and degree $d$. Then the following are equivalent.
\begin{enumerate}
\item[(1)] $s_1 ( E ) > d + r ( 2g - 1 + k )$.
\item[(2)] For all $M \in \Pic^0 ( C )$ and all $x \in S$, the osculating space $\Osc^k ( S, x ; \cL_M )$ has dimension $kr$.
\item[(3)] For all $M \in \Pic^0 ( C )$ we have $d^k (\cL_M) = kr$ and $\Phi^k ( \cL_M )$ is empty.
\end{enumerate}
\end{theorem}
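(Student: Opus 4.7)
First, the equivalence (2) $\Leftrightarrow$ (3) is essentially formal. As noted after (\ref{SectionOfL}), we always have $\dim\Osc^k(S,x;\cL_M)\le kr$; since this dimension is lower semicontinuous in $x$, its generic value $d^k(\cL_M)$ is also its pointwise maximum. Hence the pointwise equality $\dim\Osc^k(S,x;\cL_M)=kr$ for all $x\in S$ is the same as the conjunction $d^k(\cL_M)=kr$ and $\Phi^k(\cL_M)=\emptyset$.

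The substantive equivalence (1) $\Leftrightarrow$ (2) I would prove by translating (1) into a degree bound on line subbundles of $\Kc M^{-1}\otimes E$ and applying Proposition \ref{ParamInfl}. Unwinding the definition of $s_1$, condition (1) says that every line subbundle $F\subset E$ satisfies $\deg F\le -(2g+k)$; tensoring with $\Kc M^{-1}$ for $M\in\Pic^0(C)$, this is equivalent to: every line subbundle of $\Kc M^{-1}\otimes E$ has degree $\le -(k+2)$. Now suppose $(p,\sigma)\in R^\ell_M$ is a point at which $e_\ell$ is defined, for some $0\le\ell\le k$; then $\sigma\colon \Oc(-(\ell+1)p)\to \Kc M^{-1}\otimes E$ is saturated at $p$, so its saturation over $C$ is a line subbundle of $\Kc M^{-1}\otimes E$ of degree $\ge -(\ell+1)\ge -(k+1)$, violating the bound. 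Hence $e_\ell(R^\ell_M)=\emptyset$ for all $0\le\ell\le k$, and the contrapositive of Proposition \ref{ParamInfl}(b) yields $\dim\Osc^k(S,x;\cL_M)\ge kr$ for every $x$. Combined with the automatic reverse inequality, this gives (2).

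For the converse I would argue by contraposition: assume (1) fails, so there exists a line subbundle $L\subset E$ with $\deg L\ge -(2g-1+k)$. Pick any $p\in C$ and let $D$ be an effective divisor of degree $\deg L+(2g-1+k)\ge 0$ supported away from $p$; then $L':=L(-D)$ is a sheaf injection into $E$ of exact degree $-(2g-1+k)$, saturated at $p$ since $D$ avoids $p$ and $L$ is a subbundle there. Setting $M:=\Kc\otimes L'\otimes\Oc((k+1)p)$, a direct degree count gives $\deg M=0$, so $M\in\Pic^0(C)$; and by construction $\Kc M^{-1}\otimes L'\cong \Oc(-(k+1)p)$. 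Composing this isomorphism with $L'\hookrightarrow E$ produces a sheaf injection $\sigma\colon\Oc(-(k+1)p)\hookrightarrow\Kc M^{-1}\otimes E$ that is saturated at $p$, so $(p,\sigma)\in R^k_M$ with $e_k$ defined there. Proposition \ref{ParamInfl}(a) then gives $\dim\Osc^k(S,e_k(p,\sigma);\cL_M)<kr$, contradicting (2).

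The main technical subtlety lies in the converse direction: one must reduce to a line subsheaf of \emph{exact} degree $-(2g-1+k)$, so that the line bundle $M:=\Kc\otimes L'\otimes\Oc((k+1)p)$ fabricated from it lies in $\Pic^0(C)$. The device of subtracting an effective divisor $D$ supported away from $p$ accomplishes both the degree calibration and saturation at $p$ in one stroke, allowing Proposition \ref{ParamInfl}(a) to apply cleanly. Once this is set up, the remainder of the argument is routine arithmetic in $\Pic(C)$.
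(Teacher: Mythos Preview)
Your proof is correct and follows essentially the same strategy as the paper's: both directions reduce to Proposition~\ref{ParamInfl} via the dictionary between line subsheaves of $\Kc M^{-1}\otimes E$ and points of the spaces $R^\ell_M$. The only cosmetic differences are that you calibrate the degree explicitly in $E$ by subtracting an effective divisor $D$ away from a pre-chosen $p$, whereas the paper works directly in $\Kc\otimes E$, asserts the existence of a subsheaf of degree exactly $-(k+1)$, and then picks $p$ to be a point where it is saturated; both constructions are equally valid and lead to the same $M$ and the same application of Proposition~\ref{ParamInfl}(a).
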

\begin{remark} By Theorem \ref{HirschBound}, we have $s_1 ( E ) \le (r-1)g$ in any case. Then one computes that (1) can obtain only if
\[ d \ < \ -(r+1)(g - 1) - (kr + 1) \ \le \ -(r+1)(g-1) - 1 , \]
the latter since $k \ge 0$. We will use this line of argument again in Corollary \ref{SpecialCases} (b). \end{remark}
\begin{comment}
Details: (1) can obtain only if
\begin{align*} d + r(2g - 1 + k) \ & < \ (r-1)(g-1) + \delta \ = \ r(g - 1) - (g-1) + \delta\\
d \ & < \ r ( g - 1 - 2g + 1 - k ) - (g-1) + \delta \\
d \ & < \ r ( - g + 1 - (k + 1) ) - (g-1) + \delta \\
d \ & < \ -(g-1)(r+1) - (k+1)r + \delta \\
d \ & < \ -(g-1)(r+1) - (k+1)r + \delta \\
d \ & < \ -(g-1)(r+1) - (k+1)r + r - 1 \\
d \ & < \ -(g-1)(r+1) - (kr + 1) \\
\end{align*}
\end{comment}

\begin{proof} The equivalence of (2) and (3) follows from the definitions. We will prove the equivalence of (1) and (2). Suppose $s_1 ( E ) > d + r ( 2g - 1 + k )$. Recall that $s_1 ( \Kc M^{-1} \otimes E ) = s_1 ( E )$ for any line bundle $M$. Thus for any $M \in \Pic^0 (C)$, each invertible subsheaf $L$ of $\Kc M^{-1} \otimes E$ satisfies
\[ \deg ( \Kc M^{-1} \otimes E ) - r \cdot \deg ( L ) \ > \ d + r ( 2g - 1 + k ) , \]
that is, $\deg (L) < - (k + 1)$. Thus $Q_{-(\ell+1)} ( \Kc M^{-1} \otimes E )$ is empty for $\ell \le k$. In particular, the fibre product $R^\ell_M$ defined in (\ref{defnRkM}) is empty for $0 \le \ell \le k$. By Proposition \ref{ParamInfl} (b), we have $\dim ( \Osc^k ( S, x ; \cL_M ) = kr$ for all $M$ and all $x$, establishing (2).

Conversely, suppose $s_1 ( E ) \le d + r ( 2g - 1 + k )$. Then there is a sheaf injection $\sigma' \colon L \to \Kc \otimes E$ where $L$ is invertible of degree $-(k+1)$. Let $p \in C$ be any point where $\sigma'|_p$ is a vector bundle injection. Set $M = L ( (k+1)p )$, a line bundle of degree zero. Then
\[ \sigma \ := \ \sigma' \otimes \Iden_{M^{-1}} \colon \Oc ( - (k+1)p ) \ \to \ \Kc M^{-1} \otimes E \]
is a vector bundle injection at $p$. This determines a point $(p, \sigma) \in R^k_M$ at which $e_k$ is defined. Writing $x := e_k ( p, \sigma )$, by Proposition \ref{ParamInfl} (a) we have $\dim ( \Osc^k ( S, x ; \cL_M ) ) < kr$. Thus (2) implies (1). \end{proof}

Let us now discuss some special cases.

\begin{corollary} \label{SpecialCases} Suppose $d \le r(1-2g)$. Let $E$ be a vector bundle of rank $r$ and degree $d$, and $S = \PP E$.
\begin{enumerate}
\item[(a)] Suppose $E$ is stable (or more generally, $s_1 (E) > 0$). Then for $k \le \mu ( E^* ) - (2g-1)$ and for all $M \in \Pic^0 (C)$, we have $\dim ( \Osc^k ( S, x ; \cL_M )) = kr$ for all $x \in S$.
\item[(b)] Let $E$ be any bundle of rank $r$ and degree $d$. If
\[ k \ge \mu (E^*) - \frac{(r+1)g - 1}{r} \]
then $\dim ( \Osc^k ( S, x ; \cL_M ) ) < kr$ for some $M \in \Pic^0 ( C )$.
\item[(c)] If $s_1 (E)$ is the generic value (for example, if $E$ is a general stable bundle), then the converse to (b) also holds.
\end{enumerate} \end{corollary}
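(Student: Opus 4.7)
The plan is to reduce each of the three parts to a numerical inequality via Theorem \ref{MainA}, which converts the statement about osculating spaces into the condition $s_1(E) > d + r(2g-1+k)$. The Hirschowitz bound $s_1(E) \le (r-1)(g-1) + \delta$ from Theorem \ref{HirschBound} supplies the other direction; here $\delta \in \{0, \ldots, r-1\}$ is determined by the congruence $(r-1)(g-1) + \delta \equiv d \pmod r$.

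For (a), I would observe that the hypothesis $k \le \mu(E^*) - (2g-1) = -d/r - (2g-1)$ rearranges to $d + r(2g-1+k) \le 0 < s_1(E)$, so Theorem \ref{MainA} $(1) \Rightarrow (2)$ applies directly.

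For (b), the hypothesis $k \ge \mu(E^*) - ((r+1)g-1)/r$ rearranges to $d + (r+1)g - 1 + kr \ge 0$. The key observation is that the Hirschowitz congruence $(r-1)(g-1) + \delta \equiv d \pmod r$ translates to $d + (r+1)g - 1 \equiv \delta \pmod r$, so the nonnegative integer $d + (r+1)g - 1 + kr$ is congruent to $\delta \in \{0,\ldots,r-1\}$ modulo $r$ and hence is at least $\delta$. Unwinding, this reads $(r-1)(g-1) + \delta \le d + r(2g-1+k)$; combining with Hirschowitz yields $s_1(E) \le d + r(2g-1+k)$, which negates condition $(1)$, and the contrapositive of Theorem \ref{MainA} $(1) \Rightarrow (2)$ furnishes the required $M$ and $x$. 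For (c), a bundle with generic Segre invariant saturates the Hirschowitz bound, so $s_1(E) = (r-1)(g-1) + \delta$ exactly. The negation $k < \mu(E^*) - ((r+1)g-1)/r$ then becomes $d + (r+1)g - 1 + kr < 0$, whence $s_1(E) > d + r(2g-1+k)$, and Theorem \ref{MainA} $(1) \Rightarrow (2)$ closes the argument.

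The one genuinely delicate point is the modular observation in (b): without it the stated bound $((r+1)g-1)/r$ would appear too weak by an additive term $\delta/r$, and a naive use of the cruder estimate $s_1(E) \le (r-1)g$ would force the stronger hypothesis $k \ge \mu(E^*) - ((r+1)g-r)/r$. Absorbing this discrepancy via the mod-$r$ trick is what makes the bound in (b) tight enough to be saturated generically in (c). Apart from this, the plan is bookkeeping, translating one linear inequality in $k$ into another and invoking Theorem \ref{MainA}.
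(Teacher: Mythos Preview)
Your proof is correct and follows essentially the same approach as the paper's: all three parts are reduced to comparing $s_1(E)$ with $d + r(2g-1+k)$ via Theorem \ref{MainA}, with the Hirschowitz bound and the congruence $(r-1)(g-1) + \delta \equiv d \pmod r$ supplying the needed inequalities. Your packaging of the mod-$r$ step in (b) (observing that the nonnegative integer $d+(r+1)g-1+kr$ is $\equiv \delta$ and hence $\ge \delta$) is a slight rephrasing of the paper's computation, and your (c) is the contrapositive of the paper's version, but the arguments are identical in substance.
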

\begin{proof} (a) Suppose $k \le \mu( E^* ) - (2g-1)$. (The condition on $d$ is to ensure that this can obtain for some $k \ge 0$.) Then $d + r(2g-1 + k) \le 0$.	
\begin{comment}
Details: 
\[ k \ \le \ \mu( E^* ) - (2g-1) \quad \Longleftrightarrow \]
\[ kr \ \le \ -d - r(2g-1) \quad \Longleftrightarrow \]
\[ 0 \ \le \ -d - r(2g - 1 + k ) \quad \Longleftrightarrow \]
\[ 0 \ \ge \ d + r(2g - 1 + k ) \]
\end{comment}
Thus if $s_1 (E) > 0$ then by Theorem \ref{MainA}, for all $M \in \Pic^0 (C)$ we have $\dim ( \Osc^k ( S, x ; \cL_M ) ) = kr$ for all $x \in S$.

(b) As $k$ is an integer, the hypothesis implies that
\begin{equation} kr \ \ge \ - d - (r+1)g + 1 + \delta \label{PartbIneq} \end{equation}
where $0 \le \delta \le r-1$ is such that $- d - (r+1)g + 1 + \delta \equiv 0 \mod r$. One computes that inequality (\ref{PartbIneq}) is equivalent to
\[ (r-1)(g-1) + \delta \ \le \ d + r (2g-1 + k) \]
and that $\delta$ satisfies $(r-1)(g-1) + \delta \equiv d \mod r$.
\begin{comment}
\[ kr \ \ge \ - d - (r+1)g + 1 + \delta \]
\[ d + r(2g-1) + kr \ \ge \ r(2g-1) - (r+1)g + 1 + \delta \]
\[ d + r(2g-1+ k) \ \ge \ 2rg - r - rg - g + 1 + \delta \]
\[ d + r(2g-1+ k) \ \ge \ rg - r - g + 1 + \delta \]
\[ d + r(2g-1+ k) \ \ge \ r(g - 1) - (g - 1) + \delta \]
\[ d + r(2g-1+ k) \ \ge \ (r-1)(g - 1) + \delta \]

Also,
\[ - d - (r+1)g + 1 + \delta \equiv 0 \ \mod \ r \]
\[ - (r+1)(g-1) - r - 1 + 1 + \delta \equiv d \ \mod \ r \]
\[ - (r+1)(g-1) + \delta \equiv d \ \mod \ r \]
\[ 2r(g-1) - (r+1)(g-1) + \delta \equiv d \ \mod \ r \]
\[ (r-1)(g-1) + \delta \equiv d \ \mod \ r \]
\end{comment}
Thus $s_1 (E) \le (r-1)(g-1) + \delta$ by Theorem \ref{HirschBound}. It follows that $s_1 (E) \le d + r(2g-1 + k)$. By Theorem \ref{MainA}, for some $M \in \Pic^0 (C)$ and some $x \in S$ we have $\dim ( \Osc^k ( S, x ; \cL_M ) ) < kr$.

(c) Suppose $s_1 (E)$ is the generic value $(r-1)(g-1) + \delta$ stated in Theorem \ref{HirschBound}. Suppose $\dim ( \Osc^k ( S, x ; \cL_M ) ) < kr$ for some $x \in S$ and $M \in \Pic^0 ( C )$. By Theorem \ref{MainA}, we have
\[ d + r (2g-1+k) \ \ge \ (r-1)(g-1) + \delta . \]
Computing, we obtain $k \ge \mu (E^*) - \frac{1}{r}\left( (r+1)g - 1 \right)$ as desired.
\begin{comment}
\[ d + r (2g-1+k) \ \ge \ (r-1)(g-1) + \delta \]
\[ kr \ \ge \ -d - r (2g-1) + (r-1)(g-1) + \delta \]
\[ kr \ \ge \ -d - 2rg + r + rg - r - g + 1 + \delta \]
\[ kr \ \ge \ -d - rg - g + 1 + \delta \]
\[ kr \ \ge \ -d - (r+1)g + 1 \]
\[ k \ \ge \ \mu (E^*) - \frac{(r+1)g - 1}{r} \]
\end{comment}
\end{proof}

\begin{remark} \label{kZeroOrOne} Let us examine the situation for $k = 0$ and $k = 1$. By definition, $\Phi^0 ( \cL_M )$ is empty if and only if $| \cL_M |$ is base point free; that is, $E^* \otimes M$ is globally generated. Thus from Corollary \ref{SpecialCases} (a) we recover the fact (see \cite[Proposition 2 (ii)]{IT}) that if $E$ is stable of slope $\le -(2g-1)$, then $E^* \otimes M$ is generated for all $M$ of degree zero, so all $S \to |\cL_M|^*$ are base point free. In fact Theorem \ref{MainA} gives a more precise statement: If $\mu (E) \le 1-2g$ then $E^* \otimes M$ is generated for all $M$ of degree zero \emph{if and only if} $s_1 ( E) > 0$. Similarly, if $\mu ( E ) \le -2g$ then the differential of $S \to |\cL_M|^*$ is everywhere injective for all $M$ if and only if $s_1 ( E ) > 0$.
\begin{comment}
Details: Note that in \cite{IT} the notation is $\PP E = \PP ( E^* )$, so their $E$ is our $E^*$. Suppose $d_1 ( E^* ) \ge 2g$. Then $\deg (L) < -(2g-1)$ for all line subbundles $L \subset E$, so $s_1 ( E ) > d - r ( 2g-1 )$. By (one implication of) Theorem \ref{MainA}, the locus $\Phi^0 ( \cL_M )$ is empty for all $M \in \Pic^0 (C)$ [and not just $M = \Oc$]. But $E^*$ is globally generated if and only if $\Phi^0 = \emptyset$.
\end{comment}
\end{remark}

\begin{remark} Theorem \ref{MainA} does not hold for incomplete linear systems. For any $M$, by projecting $|\cL_M|^*$ from a point of $\Osc^k ( S, x )$, one obtains a projective model of $S$ with an inflection point at $x$, regardless of $s_1 (E)$. \end{remark}

\subsection{A geometric characterisation of semistability} \label{semist}

We will now use Theorem \ref{MainA} to give a characterisation of the semistability property for vector bundles of slope less than $1 - 2g$ over $C$. If $E \to C$ is such a vector bundle, for $1 \le n \le r-1$ we write
\[ \pi_n \colon \PP ( \wedge^n E ) \ =: \ S_n \ \to \ C , \]
a projective bundle of dimension $\binom{r}{n}$. If $M$ is a line bundle over $C$, we write $\cL_{n, M} := \cO_{\PP ( \wedge^n E)} ( 1 ) \otimes \pi_n^* M$, and consider the map $S_n \dashrightarrow | \cL_{n, M} |^*$. %Also, set
%\[ \knmax \ := \ \max \left\{ k \ge 0 : k < n \cdot \mu ( E^*) - (2g-1) \right\} . \]
%By hypothesis, we have
%\[ \mu ( \wedge^n E^* ) \ = \ n \cdot \mu ( E^* ) \ \ge \ \mu ( E^* ) \ > \ 2g - 1 , \]
%so $\knmax \ge 0$.

\begin{theorem} \label{MainB} Let $E$ be a bundle of rank $r$ and slope $\mu < 1 - 2g$ over $C$. Then the following are equivalent.
\begin{enumerate}
\item[(1)] $E$ is semistable.
\item[(2)] For $1 \le n \le r-1$ and for $0 \le k < n \cdot \mu ( E^*) - (2g-1)$, the osculating space $\Osc^k ( S_n , y ; \cL_{n, M} )$ is of the expected dimension $k \cdot \binom{r}{n}$ for all $y \in S_n$ and all $M \in \Pic^0 ( C )$.
\end{enumerate} \end{theorem}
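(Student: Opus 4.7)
My plan is to reduce the statement to an application of Theorem~\ref{MainA} to each exterior power $\wedge^n E$, combined with the standard characteristic-zero fact that $E$ is semistable if and only if $\wedge^n E$ is semistable for every $1 \le n \le r-1$.

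Since $\wedge^n E$ has rank $\binom{r}{n}$ and slope $n\mu(E)$, Theorem~\ref{MainA} applied to $F = \wedge^n E$ states that $\Osc^k(S_n, y; \cL_{n, M})$ has the expected dimension $k\binom{r}{n}$ for all $y$ and all $M \in \Pic^0(C)$ if and only if
\[ s_1(\wedge^n E) \ > \ \deg(\wedge^n E) + \binom{r}{n}(2g - 1 + k). \]
Since $\deg(\wedge^n E) = -\binom{r}{n}\, n\mu(E^*)$, the condition $0 \le k < n\mu(E^*) - (2g-1)$ in (2) is precisely the range in which this right-hand side is negative.

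The implication (1) $\Rightarrow$ (2) is now immediate: if $E$ is semistable, then so is $\wedge^n E$, hence $s_1(\wedge^n E) \ge 0$ trivially exceeds any negative quantity, and Theorem~\ref{MainA} delivers (2).

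For (2) $\Rightarrow$ (1), fix $n$ and let $k_{\max}$ denote the largest integer with $k_{\max} < n\mu(E^*) - (2g-1)$. The hypothesis $\mu(E) < 1-2g$ gives $n\mu(E^*) > 2g-1$ for $n \ge 1$, so $k_{\max} \ge 0$ exists. Applying (2) and Theorem~\ref{MainA} with $k = k_{\max}$ yields the strict inequality above. Both sides of that inequality are congruent to $\deg(\wedge^n E) \pmod{\binom{r}{n}}$---the left-hand side because $s_1(\wedge^n E) = \deg(\wedge^n E) - \binom{r}{n} \deg L$ for some line subbundle $L \subset \wedge^n E$---so the strict integer inequality improves by at least $\binom{r}{n}$:
\[ s_1(\wedge^n E) \ \ge \ \deg(\wedge^n E) + \binom{r}{n}(2g + k_{\max}). \]
Maximality of $k_{\max}$ gives $k_{\max} + 2g \ge n\mu(E^*)$, hence $\binom{r}{n}(k_{\max} + 2g) \ge -\deg(\wedge^n E)$, and therefore $s_1(\wedge^n E) \ge 0$. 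Thus $\wedge^n E$ is semistable for every admissible $n$, in particular for $n=1$, so $E$ itself is semistable.

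The decisive step is the divisibility upgrade: without invoking $s_1(\wedge^n E) \equiv \deg(\wedge^n E) \pmod{\binom{r}{n}}$, condition (2) would only force $s_1(\wedge^n E)$ to be strictly larger than a specific negative integer, falling short of semistability by precisely the integer jump of $\binom{r}{n}$ that divisibility supplies.
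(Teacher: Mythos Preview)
Your overall strategy matches the paper's: apply Theorem~\ref{MainA} to each $\wedge^n E$, and use the congruence $s_1(\wedge^n E) \equiv \deg(\wedge^n E) \pmod{\binom{r}{n}}$ to upgrade the strict inequality at $k_{\max}$ to $s_1(\wedge^n E) \ge 0$. The forward direction and the divisibility step are fine and essentially identical to the paper's argument.

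There is, however, a genuine gap in your last sentence of $(2)\Rightarrow(1)$. From $s_1(\wedge^n E) \ge 0$ you conclude ``$\wedge^n E$ is semistable for every admissible $n$, in particular for $n=1$, so $E$ itself is semistable.'' Neither inference is valid. The condition $s_1(F) \ge 0$ only controls \emph{line} subbundles of $F$; it does not by itself make $F$ semistable (that would require $s_m(F) \ge 0$ for all $m$). In particular, taking $n=1$ gives only $s_1(E) \ge 0$, which says nothing about subbundles of $E$ of rank $\ge 2$. So the ``standard characteristic-zero fact'' you invoke in your plan is not actually applicable, because you have not established semistability of any $\wedge^n E$.

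The correct closing step---and this is exactly what the paper does---bypasses semistability of $\wedge^n E$ entirely. Given a rank $n$ subbundle $F \subset E$, the top exterior power $\wedge^n F$ is a \emph{line} subbundle of $\wedge^n E$ with $\deg(\wedge^n F) = \deg F$. Since you have shown $s_1(\wedge^n E) \ge 0$, this yields $\deg F \le \mu(\wedge^n E) = n\,\mu(E)$, hence $\mu(F) \le \mu(E)$. Running this over $1 \le n \le r-1$ gives semistability of $E$ directly. Replacing your final sentence with this observation completes the proof.
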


\begin{proof} Suppose $E$ is semistable. As $\K$ has characteristic zero, by \cite[Theorem 10.2.1]{LeP}, for $1 \le n \le r$ the exterior product $\wedge^n E$ is semistable of slope $n \cdot \mu (E)$. %https://books.google.no/books?id=okHfUv4l4vgC&printsec=frontcover#v=onepage&q=tensor&f=false
 In particular, $s_1 ( \wedge^n E ) \ge 0$. By Theorem \ref{MainA}, for all $M \in \Pic^0 (C)$ and all $y \in S_n$, the osculating space $\Osc^k ( S_n , y ; \cL_{n, M}) )$ has dimension $k \cdot \binom{r}{n}$ for all $k \ge 0$ such that
\[ \deg ( \wedge^n E ) + \rank ( \wedge^n E ) ( 2g-1 + k ) \ < \ 0 , \]
%\[ \mu ( \wedge^n E ) + ( 2g-1 + k ) \ < \ 0 , \]
%\[ n \cdot \mu ( E ) + (2g-1) + k \ < \ 0 , \]
%\[ k \ < \ n \cdot \mu ( E^* ) - ( 2g-1 ) \]
%that is, $0 \le k \le \knmax$.
that is, $k < n \cdot \mu ( E^*) - (2g-1)$.

Conversely, assume (2) holds. %Now suppose that for $1 \le n \le r-1$ the osculating spaces $\Osc^k ( \PP ( \wedge^n E ) , x ; \cL_{n, M} )$ have dimension $k \cdot \binom{r}{n}$ for all $x$ and $M$ for $0 \le k_n < n \cdot \mu ( E^* ) - (2g-1)$; that is, for $0 \le k_n \le \knmax$. Then 
 By Theorem \ref{MainA}, for $1 \le n \le r-1$ we have
\[ s_1 ( \wedge^n E ) \ > \ \deg ( \wedge^n E ) + \rank ( \wedge^n E ) \cdot ( 2g - 1 + k_n ) , \]
where $k_n = \max \left\{ k \ge 0 : k < n \cdot \mu ( E^*) - (2g-1) \right\}$. By the definition (\ref{defnSegreInv}) of $s_1 ( E )$, the left and right hand sides of the last inequality are congruent modulo $\rank ( \wedge^n E )$. 
%$s_1 ( \wedge^n E ) = d - \rank ( \wedge^n E ) \cdot \deg (L)$ for some line bundle $L$.
 Therefore,
\[ s_1 ( \wedge^n E ) \ \ge \ \deg ( \wedge^n E ) + \rank(\wedge^n E) \cdot ( 2g - 1 + k_n + 1 ) \]
%\[ \ge \ \deg ( \wedge^n E ) + \rank(\wedge^n E) \cdot ( 2g - 1 + n \cdot \mu ( E^*) - (2g-1) ) \ = \ 0 \]
By definition of $k_n$, 
%$\knmax + 1 \ge n \cdot \mu ( E^*) - (2g-1)$.
 this becomes $s_1 ( \wedge^n E ) \ge 0$.
%\[ \deg ( \wedge^n E ) + \rank(\wedge^n E) \cdot ( 2g - 1 + \knmax + 1 ) \ge \deg ( \wedge^n E ) + \rank(\wedge^n E) \cdot ( 2g - 1 + n \cdot \mu ( E^*) - (2g-1) ) \]
%\[ = \deg ( \wedge^n E ) + \rank(\wedge^n E) \cdot ( n \cdot \mu ( E^*) ) \]
%\[ = \deg ( \wedge^n E ) + \rank(\wedge^n E) \cdot \mu ( \wedge^n E^*) \]
%\[ = \deg ( \wedge^n E ) + \deg ( \wedge^n E^*) \ = \ 0 \]

Now suppose $F$ is a rank $n$ subbundle of $E$. Then $\wedge^n F$ is a line subbundle of $\wedge^n E$. Since we have shown that $s_1 ( \wedge^n E ) \ge 0$, it follows that
\[ \mu ( \wedge^n F ) \ = \ \deg (F) \ \le \ \mu ( \wedge^n E ) \ = \ n \cdot \mu (E) , \]
whence $\mu ( F ) \le \mu (E)$. Hence $E$ is semistable. \end{proof}

%\begin{remark} The hypothesis $\mu (E^*) \ge (2g-1)$ is to ensure that for some $k \ge 0$ the osculating spaces can be of the expected dimension. This may happen for other values of $d$ for special $E$, but we want a uniform statement, valid for all $E$. \end{remark}

\subsection{Cohomological stability} \label{CohomSt} The implication $(1) \Rightarrow (2)$ of Theorem \ref{MainB} fails if we replace ``semistability'' with ``stability'', as stability of $E$ only implies semistability of $\wedge^n E$ for $n \ge 2$. The notion of cohomological stability, introduced in \cite{EL} and studied further in \cite{MS} and elsewhere, turns out to be more natural here. We recall the definition:

\begin{definition} A vector bundle $E \to C$ of rank $r$ is \textsl{cohomologically stable} (resp., \textsl{cohomologically semistable}) if for $1 \le n \le r-1$ we have $h^0 ( C, N^{-1} \otimes \wedge^n E ) = 0$ for all line bundles $N$ with $\deg ( N ) \ge n \cdot \mu ( E )$ (resp., $\deg (N) > n \cdot \mu (E)$). \end{definition}

It is not hard to see that $E$ is cohomologically stable if and only if $s_1 ( \wedge^n E ) > 0$ for $1 \le n \le r - 1$. 
\begin{comment}
Details: If $N$ is invertible then $N$ is a subsheaf of $F$ if and only if $h^0 ( N^{-1} \otimes F ) > 0$. Hence the defn of cohom stab says that for all invertible subsheaves $N$ of $\wedge^n E$ we have $\deg (N) < n \cdot \mu (E) = \mu ( \wedge^n E )$. This is equivalent to
\[ \deg ( \wedge^n E ) - \rank (\wedge^n E ) \cdot \deg (N) \ > \ 0 , \]
that is, $s_1 ( \wedge^n E ) > 0$.
\end{comment}
 Thus if $\rank(E) = 2$ then cohomological stability is equivalent to slope stability, both being equivalent to the single inequality $s_1 ( E ) > 0$. For $r \ge 3$, however, cohomological stability is stronger. For example, by \cite[Proposition 3.1]{CH2} a generic symplectic bundle $E$ of rank $2n$ and trivial determinant is a stable vector bundle, but if $\rank (E) \ge 4$ then $E$ is only cohomologically semistable since $h^0 ( C, \wedge^2 E ) \ne 0$.

Cohomological stability lends itself to a characterisation via inflectional loci more naturally than slope stability. An argument virtually identical to that of Theorem \ref{MainB} shows the following:

\begin{theorem} \label{MainBmodified} Let $E$ be a bundle of rank $r$ and degree $d \le r(1 - 2g)$ over $C$. Then $E$ is cohomologically stable if and only if for all $M \in \Pic^0 ( C )$ and for $1 \le n \le r-1$, the osculating space $\Osc^k ( S_n, y ; \cL_{n, M} )$ are of dimension $k \cdot \binom{r}{n}$ for $0 \le k \le n \cdot \mu ( E^* ) - (2g - 1)$ and for all $y \in S_n$. \end{theorem}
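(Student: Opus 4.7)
The plan is to follow the proof of Theorem \ref{MainB} essentially verbatim, using the observation recorded just above the statement that cohomological stability of $E$ is equivalent to the strict inequality $s_1(\wedge^n E) > 0$ for $1 \le n \le r - 1$. Correspondingly, the range of $k$ shifts from the strict bound $k < n\mu(E^*)-(2g-1)$ used in Theorem \ref{MainB} (which matches $s_1(\wedge^n E) \ge 0$) to the non-strict bound $k \le n\mu(E^*)-(2g-1)$ (matching $s_1(\wedge^n E) > 0$).

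For the implication from cohomological stability to the osculating condition, assume $s_1(\wedge^n E) > 0$ for every $n$. For any integer $k$ with $0 \le k \le n\mu(E^*)-(2g-1)$ one has
\[ \deg(\wedge^n E) + \binom{r}{n}(2g-1+k) \ = \ \binom{r}{n}\bigl(n\mu(E) + 2g - 1 + k\bigr) \ \le \ 0, \]
since $k \le -n\mu(E)-(2g-1)$. Thus $s_1(\wedge^n E) > 0$ strictly exceeds this quantity, and Theorem \ref{MainA} applied to $\wedge^n E$ yields $\dim \Osc^k(S_n, y; \cL_{n,M}) = k\binom{r}{n}$ for every $y \in S_n$ and every $M \in \Pic^0(C)$.

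For the converse, fix $n$ and set $k_n := \lfloor n\mu(E^*) - (2g-1) \rfloor$. The hypothesis $\mu(E) \le 1-2g$ ensures $n\mu(E^*) - (2g-1) \ge 0$, so $k_n \ge 0$ lies in the stated range. The osculating hypothesis at $k = k_n$, combined with Theorem \ref{MainA}, gives
\[ s_1(\wedge^n E) \ > \ \deg(\wedge^n E) + \binom{r}{n}(2g-1+k_n). \]
As in the proof of Theorem \ref{MainB}, both sides are congruent to $\deg(\wedge^n E)$ modulo $\binom{r}{n}$ by the definition of $s_1$, so the strict inequality upgrades to
\[ s_1(\wedge^n E) \ \ge \ \deg(\wedge^n E) + \binom{r}{n}(2g+k_n) \ = \ \binom{r}{n}\bigl(n\mu(E) + 2g + k_n\bigr). \]
Maximality of $k_n$ forces $k_n + 1 > n\mu(E^*) - (2g-1)$, that is $n\mu(E) + 2g + k_n > 0$; the right-hand side is therefore a positive integer and at least $1$, so $s_1(\wedge^n E) > 0$. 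This holds for each $1 \le n \le r-1$, giving cohomological stability.

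The only delicate point, entirely parallel to Theorem \ref{MainB}, is the congruence step: the shift from the non-strict bound on $k$ in the hypothesis to the strict positivity $s_1(\wedge^n E) > 0$ in the conclusion is precisely what the congruence modulo $\binom{r}{n}$ provides. No new ingredient beyond Theorem \ref{MainA} and the characterisation of cohomological stability by $s_1(\wedge^n E) > 0$ is needed.
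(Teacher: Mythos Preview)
Your proof is correct and follows exactly the approach the paper indicates: apply Theorem \ref{MainA} to each $\wedge^n E$, with the non-strict bound $k \le n\mu(E^*)-(2g-1)$ replacing the strict one from Theorem \ref{MainB} to match the strict inequality $s_1(\wedge^n E) > 0$ characterising cohomological stability. Your use of the congruence argument for the converse is precisely the step from the proof of Theorem \ref{MainB} that the paper has in mind when it says the argument is ``virtually identical''; indeed it is needed to handle the case where $n\mu(E^*)-(2g-1)$ is not an integer.
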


\begin{comment}
\begin{proof}
Suppose $E$ is cohomologically stable, so $s_1 ( \wedge^n E ) > 0$ for $1 \le n \le r-1$. By Theorem \ref{MainA}, for all $M \in \Pic^0 (C)$ and all $y \in S_n$, the osculating space $\Osc^k ( S_n , y ; \cL_{n, M}) )$ has dimension $k \cdot \binom{r}{n}$ for all $k \ge 0$ such that
\[ \deg ( \wedge^n E ) + \rank ( \wedge^n E ) ( 2g-1 + k ) \ \le \ 0 , \]
%\[ \mu ( \wedge^n E ) + ( 2g-1 + k ) \ \le \ 0 , \]
%\[ n \cdot \mu ( E ) + (2g-1) + k \ \le \ 0 , \]
%\[ k \ \le \ n \cdot \mu ( E^* ) - ( 2g-1 ) \]
that is, $0 \le k \le n \cdot \mu ( E^* ) - (2g - 1)$.

Conversely, assume that for all $M \in \Pic^0 ( C )$ and for $1 \le n \le r-1$, the osculating spaces $\Osc^k ( S_n, y ; \cL_{n, M} )$ are of dimension $k \cdot \binom{r}{n}$ for $0 \le k \le n \cdot \mu ( E^* ) - (2g - 1)$. By Theorem \ref{MainA}, for $1 \le n \le r-1$ we have
\begin{multline*} s_1 ( \wedge^n E ) \ > \ \deg ( \wedge^n E ) + \rank ( \wedge^n E ) \cdot ( 2g - 1 + (n \cdot \mu ( E^* ) - (2g - 1)) ) \\
 > \ \deg ( \wedge^n E ) + \rank ( \wedge^n E ) \cdot \mu ( \wedge^n E^* ) \ = \ 0 . \end{multline*} \end{proof}
\end{comment}

\section{Inflectional loci of general scrolls} \label{dimInfl}

In this section we will give another application of the ideas in {\S} \ref{SectionQuotInfl}, showing that for general $S$ and $M$ the inflectional loci $\Phi^k (\cL_M)$ are of the expected dimension. Firstly, we recall Kleiman's theorem on transversality of intersection of translates \cite[Theorem 2]{Kle}. Note that this theorem requires the hypothesis $\mathrm{char}(\K) = 0$.

\begin{theorem} \label{Kleiman} Let $G$ be a connected algebraic group (not necessarily linear). Let $Z$ be an irreducible variety with a transitive $G$-action. Let $a \colon X \to Z$ and $b \colon Y \to Z$ be maps of nonsingular integral schemes. For $\gamma \in G$, let $\gamma \cdot Y$ denote $Y$ considered as a $Z$-scheme by the map $y \mapsto \gamma \cdot b (y)$. Then there exists a dense open subset $U \subseteq G$ such that for $\gamma \in U$, the fibre product $(\gamma \cdot Y) \times_Z X$ is either empty, or equidimensional and smooth of the expected dimension $\dim (X) + \dim (Y) - \dim (Z)$. \end{theorem}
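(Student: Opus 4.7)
The plan is to replace the one-parameter family $\{(\gamma \cdot Y) \times_Z X\}_{\gamma \in G}$ by a single total space over $G$, and then to apply generic smoothness in characteristic zero. Concretely, I would form the incidence variety
\[ W \ := \ \{ (x, y, \gamma) \in X \times Y \times G : a(x) = \gamma \cdot b(y) \} , \]
which sits inside the smooth scheme $X \times Y \times G$ cut out by the pullback of the diagonal of $Z$.

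First I would analyse the projection $q \colon W \to X \times Y$. Its fibre over $(x, y)$ is $\{ \gamma \in G : \gamma \cdot b(y) = a(x) \}$, which, thanks to transitivity of the $G$-action on $Z$, is a left coset of the stabiliser of $b(y)$. The orbit map $G \to Z$, $g \mapsto g \cdot b(y)$, is a surjective morphism of relative dimension $\dim G - \dim Z$, so each fibre of $q$ is nonempty and of pure dimension $\dim G - \dim Z$. In characteristic zero the stabilisers are automatically smooth (by Cartier's theorem that every group scheme of finite type over a field of characteristic zero is reduced), so $q$ is in fact a smooth surjection with equidimensional fibres. It follows that $W$ is smooth and integral of dimension $\dim X + \dim Y + \dim G - \dim Z$.

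Next, consider the second projection $p \colon W \to G$. Since $W$ and $G$ are both smooth and integral and we are in characteristic zero, generic smoothness yields a dense open $U \subseteq G$ over which $p$ is smooth; for each $\gamma \in U$ the scheme-theoretic fibre $p^{-1}(\gamma)$ is either empty (if $\gamma$ lies outside $p(W)$) or smooth and equidimensional of the expected relative dimension
\[ \dim W - \dim G \ = \ \dim X + \dim Y - \dim Z . \]
By construction $p^{-1}(\gamma)$ agrees with the fibre product $(\gamma \cdot Y) \times_Z X$, which is exactly the statement to be proved.

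The main obstacle I expect is verifying that $q$ is genuinely smooth, rather than only flat with equidimensional fibres: this requires the stabilisers $\mathrm{Stab}_G(b(y)) \subseteq G$ to be smooth subgroup schemes, equivalently that the orbit maps $G \to Z$ be separable. This holds automatically in characteristic zero but may fail in positive characteristic, and is precisely where the standing hypothesis $\mathrm{char}(\K) = 0$ is essential. Once smoothness of $q$ is established, the remainder of the argument is a routine application of generic smoothness on the smooth total space $W$.
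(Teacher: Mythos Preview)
The paper does not prove this statement at all: it is quoted verbatim as Kleiman's transversality theorem and attributed to \cite[Theorem 2]{Kle}, with no argument given. Your proposal is essentially Kleiman's own proof, and it is correct.

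One small point: you assert that $W$ is \emph{integral}, but the fibres of $q$ are cosets of the stabiliser $\mathrm{Stab}_G(b(y))$, which need not be connected even when $G$ is. So $W$ is smooth and equidimensional, but possibly disconnected. This does not damage the argument: since $W$ is smooth its connected components are disjoint and individually integral, so you may apply generic smoothness to each component $W_i \to G$ separately and intersect the resulting dense opens in $G$. Also, to pass from ``$q$ has smooth equidimensional fibres'' to ``$q$ is smooth'' you implicitly need flatness of $q$; this follows from miracle flatness, since $W \subseteq X \times Y \times G$ is cut out by a smooth map (the orbit maps being smooth in characteristic zero, as you note) and $X \times Y$ is regular. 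With these two clarifications your outline goes through.
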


We will also require the following generalisation of \cite[Lemma 3.3]{LNew}. Recall that a vector bundle map $E \to \Kc \otimes E$ can be composed with itself $m$ times to obtain a map $E \to \Kc^m \otimes E$. Thus it makes sense to speak of \emph{nilpotent} maps $E \to \Kc \otimes E$.

\begin{lemma} Let $E$ be a vector bundle. Assume that there is no rank one nilpotent map $E \to \Kc \otimes E$. Then for any line bundle $L$ and all integers $\ell$, every component of the Quot scheme $Q_\ell ( L \otimes E )$ is smooth and of the expected dimension at all points. \label{QuotSmooth} \end{lemma}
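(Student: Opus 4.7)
The plan is to show that at every closed point $q = [\,0 \to N \to L \otimes E \to Q \to 0\,]$ of $Q_\ell ( L \otimes E )$, the obstruction space $\mathrm{Ext}^1 ( N, Q )$ vanishes. Since $N$ is automatically a line bundle (being a torsion-free rank-one subsheaf of the locally free sheaf $L \otimes E$ on a smooth curve), standard Quot scheme deformation theory then gives that $Q_\ell ( L \otimes E )$ is smooth at $q$ with local dimension $\dim \Hom ( N, Q ) = \chi ( N^{-1} \otimes Q )$, which is the expected dimension and is constant on $Q_\ell$. Since $N$ is locally free, $\mathrm{Ext}^1 ( N, Q ) = H^1 ( C, N^{-1} \otimes Q )$, and Serre duality on $C$ identifies this with the dual of $\Hom ( Q, \Kc \otimes N )$. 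So the task reduces to showing there is no nonzero homomorphism $f \colon Q \to \Kc \otimes N$.

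Suppose for contradiction that such an $f$ exists. Precomposing $f$ with the surjection $L \otimes E \twoheadrightarrow Q$ and postcomposing with the inclusion $\Kc \otimes N \hookrightarrow \Kc \otimes L \otimes E$ (obtained by twisting $N \hookrightarrow L \otimes E$ by $\Kc$) produces a nonzero $\Oc$-linear map $\tilde\phi \colon L \otimes E \to \Kc \otimes L \otimes E$, and tensoring with $L^{-1}$ yields a nonzero $\psi \colon E \to \Kc \otimes E$. The image of $\tilde\phi$ lies in the rank-one sheaf $\Kc \otimes N$, so $\psi$ has rank exactly one. For nilpotency, observe that $\tilde\phi |_N = 0$, because $\tilde\phi$ factors through $L \otimes E \twoheadrightarrow Q$ and this surjection annihilates $N$ by definition. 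Consequently the image of $\tilde\phi$, which lies in $\Kc \otimes N$, is annihilated by $\Iden_{\Kc} \otimes \tilde\phi$. Therefore $( \Iden_{\Kc} \otimes \tilde\phi ) \circ \tilde\phi = 0$, and untwisting gives $( \Iden_{\Kc} \otimes \psi ) \circ \psi = 0$, so $\psi$ is a rank-one nilpotent map $E \to \Kc \otimes E$, contradicting the hypothesis.

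The main subtlety I expect is the nilpotency calculation, especially because $Q$ may carry torsion; the observation that resolves this is that only the subsheaf $N$, which is a line bundle, enters the computation. The rest of the argument reduces to standard input: the deformation theory of Quot schemes on a smooth curve, Serre duality for a (possibly non-locally-free) coherent sheaf on $C$, and Riemann--Roch to identify $\chi ( N^{-1} \otimes Q )$ with the expected dimension. The entire argument is uniform in the auxiliary line bundle $L$ and integer $\ell$, so establishing $\mathrm{Ext}^1 ( N, Q ) = 0$ at every point yields the full statement of the lemma.
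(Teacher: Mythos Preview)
Your argument is correct. Both the paper and you reduce to showing the obstruction space $H^1 ( C, \Hom ( N, Q ) )$ vanishes, but the executions differ. The paper first passes to the saturation $\bar N$ of $N$: using two short exact sequences it reduces $H^1 ( C, \Hom ( N, E/N ) )$ to $H^1 ( C, \Hom ( \bar N, E/\bar N ) )$, where the quotient is now locally free, and then invokes \cite[Lemma 3.3]{LNew} for the final vanishing. You bypass the saturation step entirely by applying Grothendieck--Serre duality $H^1 ( C, N^{-1} \otimes Q )^* \cong \Hom ( Q, \Kc \otimes N )$ directly to the possibly non-locally-free $Q$, and then give the nilpotency construction explicitly rather than citing it. Your route is more self-contained and slightly shorter; the paper's route has the advantage of isolating the saturated case, which is the form in which the cited lemma is stated.
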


\begin{proof} The association $\left[ N \to L \otimes E \right] \ \mapsto \ \left[ L^{-1}N \otimes E \right]$ defines an isomorphism $Q_\ell ( L \otimes E ) \isom Q_{\ell - \deg (L)} ( E )$. Thus it suffices to prove the statement for $L = \Oc$. By the theory of Quot schemes, we must show that the obstruction space $H^1 ( C, \Hom ( N, E/N ))$ is zero for all invertible subsheaves $N \subset E$. Write $\bN$ for the saturation of $N$ and $T \cong \bN/N$ for the torsion subsheaf of $E/N$. From the exact sequence
\[ 0 \ \to \ \Hom ( N, T ) \ \to \ \Hom (N, E/N) \ \to \ \Hom ( N, E/\bN ) \ \to \ 0 \]
and since $\Supp (T)$ is zero-dimensional, we have
\begin{equation} H^1 ( C, \Hom (N, E/N) \ \cong \ H^1 ( C, \Hom ( N, E/\bN ) ) . \label{LNewOne} \end{equation}
Similarly, in view of the exact sequence
\[ 0 \ \to \ \Hom ( \bN, E/ \bN ) \ \to \ \Hom (N, E / \bN)) \ \to \ {\cE}xt^1 ( T, E / \bN ) \ \to \ 0 , \]
there is a surjection $H^1 ( C, \Hom ( \bN, E / \bN )) \twoheadrightarrow H^1 ( C, \Hom ( N, E/\bN ) )$. 
%\begin{equation} H^1 ( C, \Hom ( \bN, E / \bN )) \ \twoheadrightarrow \ H^1 ( C, \Hom ( N, E/\bN ) ) . \label{LNewTwo} \end{equation}
Combining this with (\ref{LNewOne}), %and (\ref{LNewTwo}), 
 it suffices to prove that $h^1 ( C, \Hom ( \bN, E/\bN )) = 0$. This follows from the hypothesis on $E$ by the argument of \cite[Lemma 3.3]{LNew}. \end{proof}

\begin{remark} By \cite{Lau}, a general bundle $E$ admits no nilpotent map $E \to \Kc \otimes E$, so the hypothesis of Lemma \ref{QuotSmooth} and the following theorem is satisfied if $E$ is general in moduli (with no assumptions on the smooth curve $C$). %Clearly it then holds also for $L \otimes E$, for any line bundle $L$ over $C$.
 \end{remark}

Now fix $d < r(1-g)$ and write $n := r(1-g) - d - 1$. As in (\ref{kOne}), set
\[ k' \ := \ \max \{ k \ge 0 : kr \le n \} . \]

\begin{theorem} \label{MainC} Let $E$ be a bundle of rank $r$ and degree $d$. Assume that there is no rank one nilpotent map $E \to \Kc \otimes E$. Let $\pi \colon S := \PP E \to C$ be the associated projective bundle, and $\cL_M$ the line bundle $\cO_{\PP E} (1) \otimes \pi^* M$ as before. For general $M \in \Pic^0 ( C )$, the following holds.
\begin{enumerate}
\item[(a)] The linear series $|\cL_M|$ has dimension $n$.
\item[(b)] If $0 \le k < k'$, then $d^k ( \cL_M ) = kr$ and $\Phi^k ( \cL_M )$ is empty.
\item[(c)] We have $d^{k'} (\cL_M) = k'r$ and $\Phi^{k'} ( \cL_M )$ is either empty or of the expected dimension $(k' + 1)r - n - 1$.
\end{enumerate} \end{theorem}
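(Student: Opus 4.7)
My approach is to combine Proposition \ref{ParamInfl}, which relates the inflectional loci $\Phi^k ( \cL_M )$ to the images $e_\ell ( R^\ell_M ) \subseteq S$ for $\ell \le k$, with Kleiman's transversality theorem (Theorem \ref{Kleiman}) applied to the Quot schemes underlying the $R^k_M$. I will first dispatch part (a) by a direct dimension count, then use Kleiman to control all the $R^k_M$ simultaneously for general $M$, and read off parts (b) and (c) from Proposition \ref{ParamInfl}.

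For (a), Serre duality gives $h^1 ( C, E^* \otimes M ) = h^0 ( C, \Kc \otimes E \otimes M^{-1} )$, and the latter is nonzero precisely when $M$ lies in the image of the forgetful map $Q_0 ( \Kc \otimes E ) \to \Pic^0 ( C )$. By Lemma \ref{QuotSmooth} (applicable under the hypothesis on $E$), $Q_0 ( \Kc \otimes E )$ is smooth of expected dimension $(r+1)(g-1) + d$, which under $d < r(1-g)$ is strictly less than $g$. Hence the image is a proper subvariety of $\Pic^0 ( C )$, so $h^1 = 0$ for general $M$, and $\dim | \cL_M | = n$ follows from Riemann--Roch.

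For (b) and (c), write $Q^0_k := Q_{-(k+1)} ( \Kc \otimes E )$ with its forgetful map $b^0_k$ to $\Pic^{-(k+1)}(C)$. The identification $Q_{-(k+1)} ( \Kc M^{-1} \otimes E ) \cong Q^0_k$ obtained by tensoring with $M$ exhibits $R^k_M$ as a Kleiman translate of the fibre product $C \times_{\Pic^{-(k+1)}(C)} Q^0_k$, under the transitive translation action of $G = \Pic^0(C)$ on $\Pic^{-(k+1)}(C)$. By Lemma \ref{QuotSmooth}, each component of $Q^0_k$ is smooth of expected dimension $(r+1)(g-1) + d + r(k+1)$; applying Kleiman componentwise and intersecting yields, for each $0 \le k \le k'$, a dense open $U_k \subseteq \Pic^0(C)$ such that for $M \in U_k$, $R^k_M$ is empty or equidimensional of the expected dimension $(k+1)r - n - 1$. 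Let $U$ be the intersection of these $U_k$ with the open from part (a), and fix $M \in U$. For $0 \le k < k'$, $(k+1)r \le k'r \le n$, so this expected dimension is negative, forcing $R^k_M = \emptyset$; Proposition \ref{ParamInfl}(b) then yields $\dim \Osc^k(S, x ; \cL_M) = kr$ for all $x \in S$, giving $d^k(\cL_M) = kr$ and $\Phi^k(\cL_M) = \emptyset$, which is (b). For (c), $R^{k'}_M$ has dimension at most $(k'+1)r - n - 1 < r$, so $e_{k'}(R^{k'}_M) \subsetneq S$; combining the vanishing of the lower $R^\ell_M$ with Proposition \ref{ParamInfl} identifies $\Phi^{k'}(\cL_M) = e_{k'}(R^{k'}_M)$ and gives $d^{k'}(\cL_M) = k'r$. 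The determinantal estimate (\ref{ExpCodim}) ensures that, when $\Phi^{k'}(\cL_M)$ is nonempty, $\dim \Phi^{k'}(\cL_M) \ge (k'+1)r - n - 1$, which combined with the Kleiman upper bound forces equality.

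The step I expect to be most delicate is the Kleiman setup: checking that variation of $M$ in $\Pic^0(C)$ genuinely implements the translation action on $\Pic^{-(k+1)}(C)$ required by Kleiman's theorem, and running the argument componentwise so that any reducibility of $Q^0_k$ does not obstruct the dimensional conclusions.
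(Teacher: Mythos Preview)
Your proposal is correct and follows essentially the same route as the paper: part (a) via Serre duality and the dimension bound on $Q_0(\Kc\otimes E)$ from Lemma \ref{QuotSmooth}, and parts (b)--(c) by interpreting the variation of $M$ as the translation action of $\Pic^0(C)$ on $\Pic^{-(k+1)}(C)$, applying Kleiman's theorem to control $\dim R^k_M$, and then reading off the inflectional loci via Proposition \ref{ParamInfl} together with the determinantal lower bound (\ref{ExpCodim}). Your explicit remark that Kleiman must be applied componentwise to the possibly reducible $Q^0_k$ matches the paper's parenthetical caveat, and your formulation of the translation action is if anything a little cleaner than the paper's.
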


\begin{proof} (a) By Serre duality, it suffices to show that
\begin{equation} h^1 ( C , E^* \otimes M ) \ = \ h^0 ( C, \Kc M^{-1} \otimes E ) \ = \ 0 \hbox{ for general } M \in \Pic^0 ( C ) . \label{Mvanish} \end{equation}
If $M$ is any degree zero invertible subsheaf of $\Kc \otimes E$, then by hypothesis and by Lemma \ref{QuotSmooth}, we have
%\begin{multline*} \dim ( Q_0 ( \Kc \otimes E ) ) \ = \ \chi ( C, \Hom ( M , (\Kc \otimes E)/M ) ) \ = \ d + 2r(g-1) - (r-1)(g-1) \\
% = \ d + r(g-1) + (g-1) \ < \ (g-1) , \end{multline*}
\[ \dim ( Q_0 ( \Kc \otimes E ) ) \ = \ \chi ( C, \Hom ( M , (\Kc \otimes E)/M ) ) = \ d + r(g-1) + (g-1) . \]
As $d < r(1-g)$, this is at most $g-2$. Thus a general $M \in \Pic^0 ( C )$ cannot be a subsheaf of $\Kc \otimes E$. Statement (\ref{Mvanish}) follows.

(b) The same argument as in part (a) shows that for any $M \in \Pic^0 (C)$ we have
\[ \dim ( Q_{-(k+1)} (\Kc M^{-1} \otimes E ) ) %\ = \ (r-1)(k+1) + d + r (2g-2) - (-(k+1)) - (r-1)(g-1)
 \ = \ r(k+1) + d + (r+1)(g-1) . \]
%(Note that this Quot scheme may be reducible.) 
Next, recall the space $R^k_M$ and the map $b$ from (\ref{defnRkM}). For any $M \in \Pic^0 ( C )$, we have
\[ b \left( Q_{-(k+1)} ( \Kc M^{-1} \otimes E ) \right) \ = \ M^{-1} \cdot b \left( Q_{-(k+1)} ( \Kc \otimes E ) \right) \]
where $\Pic^0 ( C )$ acts on itself by translation. Thus for general $M$, by Theorem \ref{Kleiman} each component of 
%\[ R^k_M \ = \ Q_{-(k+1)} ( \Kc M^{-1} \otimes E ) \times_{\Pic^0 (C)} C \]
 $R^k_M$ is empty or has dimension
\begin{multline} \dim ( Q_{-(k+1)} ( \Kc M^{-1} \otimes E ) ) + \dim (C) - \dim ( \Pic^0 (C) ) \\
 = \ r(k+1) + d + (r+1)(g-1) + 1 - g 
% = \ r(k+1) -(-d - r(g-1) ) + (g - 1) + 1 - g \\
% = \ r(k+1) - (n+1)
\ = \ r(k+1) - n - 1 \label{dimRkM} \end{multline}
when this is nonnegative. For $k < k'$, this is negative. Taking $M$ general enough (bearing in mind that $Q_{-(k+1)} ( \Kc \otimes E )$ may not be irreducible), we may assume that $R^k_M$ is empty. By Proposition \ref{ParamInfl} (b) we conclude that $d^k (\cL_M) = kr$ and $\Phi^k ( \cL_M )$ is empty for $0 \le k < k'$. %, so we obtain statement (b).

(c) By part (b), for all $x \in S$ and for $0 \le k < k'$ the space $\Osc^k ( S, x ; \cL_M )$ has dimension $kr$. Therefore, by Proposition \ref{ParamInfl} (b) the locus of $x \in S$ where $\dim ( \Osc^{k'} ( S, x ; \cL_M) ) < k'r$ is exactly the image of $e_{k'} \colon R^{k'}_M \dashrightarrow S$. Thus it has dimension at most $\dim ( R^{k'}_M ) = (k'+1)r - n - 1$ (cf.\ (\ref{dimRkM})). As this is at most $r - 1$, we have $d^{k'} (\cL_M) = k'r$. Since $\Phi^{k'} ( \cL_M )$ is determinantal, if it is nonempty then in this case it has dimension exactly the expected one $(k'+1)r - n - 1$. This completes the proof of (c). \end{proof}

\begin{remark} If $d \le r(1-2g)$, then for some values of $k$, we can strengthen Theorem \ref{MainC} slightly using the results of the previous section. Let $E$ be as in Lemma \ref{QuotSmooth}. Then for any $\ell$ such that $d - r \ell - (r-1)(g-1) < 0$, the Quot scheme $Q_\ell ( E )$ is empty. Thus 
%\[ s_1 ( E ) \ = \ \min\{ d - r \cdot \deg (N) : N \hbox{ an invertible subsheaf of } E \} \ \ge \ (r-1)(g-1) . \]
 $s_1 ( E ) \ge (r-1)(g-1)$. It follows easily that $s_1 (E)$ is the maximal value $(r-1)(g-1) + \delta$ defined in Theorem \ref{HirschBound}. By Corollary \ref{SpecialCases} (c), for
\[ 0 \ \le \ k \ < \mu ( E^* ) - \frac{(r+1)g - 1}{r} \ = \ -\frac{d}{r} - g - \frac{g-1}{r} , \]
we have $d^k ( \cL_M) = kr$ and $\Phi^k ( \cL_M )$ is empty for \emph{all} $M \in \Pic^0 (C)$ (not only for general $M$). For comparison,
\[ k' \ = \ \left\lfloor -\frac{d}{r} - (g-1) - \frac{1}{r} \right\rfloor . \]
\end{remark}

\subsection{Scrolls which are not linearly normal} \label{ScrollsIncSyst}

Using the general result proven in the appendix, we can generalise Theorem \ref{MainC} to incomplete linear systems. During this subsection, we assume $S = \PP E$ and $M \in \Pic^0 ( C )$ are fixed, and are general in the sense of Theorem \ref{MainC}. We now change our notation.

For any nonzero subspace $W \subset H^0 ( S, \cL_M )$, we consider the natural map $\psi_W \colon S \dashrightarrow \PP W^*$, which is the composition of $S \dashrightarrow | \cL_M |^*$ with the projection to $\PP W^*$. For $x \in S$ and $k \ge 0$ we have the osculating space $\Osc^k_W (S, x) \subseteq \PP W^*$. Write $d^k_W$ for the dimension of $\Osc^k_W ( S, x )$ at a general point $x \in S$, and $\Phi^k_W$ for the associated inflectional locus. For $0 < m < n$, write $k'_m := \max \{ k \ge 0 : kr \le m \}$.

\begin{theorem} \label{IncompleteSystemsScrolls} Let $E$, $S$ and $M$ be fixed as above. Let $W \subseteq H^0 ( S, \cL_M )$ be a general subspace of dimension $m + 1 < n + 1$.
\begin{enumerate}
\item[(a)] For $0 \le k < k'_m$, we have $d^k_W = kr$ and $\Phi^k_W$ is empty.
\item[(b)] Suppose $k = k'_m$. Then $d^{k'_m}_W = k'_m r$ and the inflectional locus $\Phi^{k'_m}_W$ is either empty or equidimensional of the expected dimension $(k'_m + 1)r - m - 1$. If $k'_m \ne k'$ then $\Phi^{k'_m}_W$ is smooth. If $k'_m = k'$ then $\Phi^{k'_m}_W$ is smooth except possibly along the closed sublocus $\Phi^{k'}$. \end{enumerate} \end{theorem}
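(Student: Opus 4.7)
The plan is to reduce the analysis of the inflectional loci $\Phi^k_W$ for the projected map $\psi_W$ to the behaviour of the complete linear system, which is already controlled by Theorem \ref{MainC}, and then to invoke Theorem \ref{IncompleteSystems} from the appendix. Concretely, a general $(m+1)$-dimensional subspace $W \subseteq H^0(S, \cL_M)$ determines, via the identification $|\cL_M|^* \cong \PP^n$, a general linear centre $\Lambda := \PP(W^\perp) \subset \PP^n$ of dimension $n - m - 1$, and $\psi_W$ is the composition of $\psi \colon S \dashrightarrow \PP^n$ with projection from $\Lambda$. By Theorem \ref{MainC}, under the standing genericity on $E$ and $M$ we have $d^k(\cL_M) = kr$ for $0 \le k \le k'$, the loci $\Phi^k(\cL_M)$ are empty for $k < k'$, and $\Phi^{k'}(\cL_M)$ is either empty or of the expected dimension $(k'+1)r - n - 1$. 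This is exactly the input one needs to apply Theorem \ref{IncompleteSystems}, which then guarantees that for a general centre $\Lambda$ each $\Phi^k_W$ is either empty or of the expected dimension, and moreover is smooth away from the inherited inflectional locus of the complete system.

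To extract (a) and (b), a short codimension count suffices. For $0 \le k < k'_m$ we have $kr \le (k'_m - 1)r \le m - r$, so $d^k_W = kr$ and the expected codimension in $S$ is $m + 1 - kr \ge r + 1 > \dim(S)$, forcing $\Phi^k_W = \emptyset$. For $k = k'_m$ we have $k'_m r \le m < (k'_m + 1) r$; a general $\Lambda$ meets a generic $\Osc^{k'_m}(S, x ; \cL_M)$ of dimension $k'_m r$ in expected dimension $k'_m r - m - 1 < 0$, so $\psi_W$ remains injective on such an osculating space, giving $d^{k'_m}_W = k'_m r$ and expected dimension $(k'_m + 1) r - m - 1$ for $\Phi^{k'_m}_W$. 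The smoothness dichotomy is then automatic from the appendix result, since $k'_m \le k'$ always: if $k'_m < k'$ then $\Phi^{k'_m}(\cL_M)$ is empty by Theorem \ref{MainC}, so $\Phi^{k'_m}_W$ arises purely from the projection and is smooth; if $k'_m = k'$ then smoothness can fail only along the inherited closed sublocus $\Phi^{k'}$.

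The step I expect to be the main obstacle is verifying the ``mild technical hypothesis'' of Theorem \ref{IncompleteSystems} for the scroll $S$ in our setting. I anticipate this amounts to combining the generic nondegeneracy of the complete-system osculating spaces supplied by Theorem \ref{MainC} with a Kleiman-type transversality argument applied to the incidence between the family of osculating spaces of $\psi(S)$ and the moving centre $\Lambda$ inside $\PP^n$, in order to ensure the expected intersection-theoretic behaviour holds for a sufficiently general projection.
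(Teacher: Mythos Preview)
Your approach is essentially the paper's: reduce to Theorem \ref{MainC} and then apply Theorem \ref{IncompleteSystems}. Two points deserve correction.

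First, the ``mild technical hypothesis'' (\ref{TechDimAssp}) is not the obstacle you anticipate. By Theorem \ref{MainC} we already know $d^k = kr$ for all $0 \le k \le k'$, so in particular $d^{(k'_m - 1)} = (k'_m - 1)r = k'_m r - r = d^{k'_m} - r$, and (\ref{TechDimAssp}) holds on the nose. No further transversality argument is required here; the Kleiman step is already absorbed into the proof of Theorem \ref{IncompleteSystems} itself. Once this is noted, part (a) follows more directly than your codimension count suggests: Theorem \ref{IncompleteSystems} (a) gives $\Phi^k_W = \Phi^k$, and the latter is empty by Theorem \ref{MainC}. Your inequality ``expected codimension $> \dim(S)$'' does not by itself force emptiness of a determinantal locus without invoking exactly this genericity.

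Second, in the case $k'_m = k'$ you have not fully accounted for equidimensionality. Theorem \ref{IncompleteSystems} (b) describes $\Phi^{k'_m}_W$ as $\Phi^{k'} \cup Y$ with $Y$ smooth equidimensional of dimension $(k'+1)r - m - 1$, but $\Phi^{k'}$ itself has dimension only $(k'+1)r - n - 1$, which is strictly smaller. One must argue that $\Phi^{k'}$ cannot be an isolated component: since $\Phi^{k'_m}_W$ is determinantal, every component has dimension at least $(k'+1)r - m - 1$, so $\Phi^{k'}$ lies in $\overline{Y}$ and the union is equidimensional. This short step is present in the paper's proof and should be added to yours.
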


\begin{proof} By hypothesis and by Theorem \ref{MainC} we have $d^k = kr$ for $0 \le k \le k'$. In particular $d^{(k'_m - 1)} = d^{k'_m} - r$ and hypothesis (\ref{TechDimAssp}) holds. 
%\[ d_{k'_m - 1} \ = \ ( k'_m - 1 ) r \ = \ k'_m r - r \ = \ d_{k'_m} - r \]
 Suppose $0 \le k < k'_m$. Since $W$ is general, by Theorem \ref{IncompleteSystems} (a) we have $d^k_W = d^k = kr$ and also $\Phi^k_W = \Phi^k$. But the latter is empty by Theorem \ref{MainC}. This proves (a).

 If $k = k'_m$ then, by Theorem \ref{IncompleteSystems} (b) we have $d^{k'_m}_W = d^{k'_m} = k'_m r$, and the locus $\Phi^k_W$ is the union of $\Phi^k$ and a locus which, if nonempty, is smooth and equidimensional of the expected dimension $r + d^{k'_m} - m - 1 = (k'_m + 1)r - m - 1$. If $k'_m < k'$ then $\Phi^{k'_m}$ is empty by Theorem \ref{MainC}, so $\Phi^{k'_m}_W$ is smooth. Suppose $k'_m = k'$. As $\Phi^{k'_m}_W$ is determinantal, every component has dimension at least
\[ (k'_m + 1)r - m - 1 \ = \ (k' + 1)r - m - 1 \ > \ (k' + 1)r - n - 1 . \]
Hence if $\Phi^{k'}$ is nonempty then it belongs to the closure of the aforementioned smooth equidimensional locus. \end{proof} %\end{comment}

\begin{remark} In \cite[Theorem 2 and Corollary 1]{LMP}, a formula is given for the cohomology class of $\Phi^{k'_m}$, assuming this is of the expected dimension $(k'_m + 1)r-m-1$. In particular, when $m = (k'_m + 1) r - 1$, 
%so the expected dimension $(k'_m + 1)r - m - 1$ is equal to $(k'_m + 1)r - (k'_m + 1)r + 1 - 1 = 0$,
 the formula enumerates the finitely many inflection points of order $k'_m$. Theorem \ref{MainC} and Corollary \ref{IncompleteSystemsScrolls} show that the assumption of expected dimension in \cite{LMP} is satisfied when $E$ and $M$ are chosen generally and when the projection is general. \end{remark}

\appendix

\section{Inflectional loci under general projections} \label{SectionIncSyst} 

In this section we will prove a general statement on the behaviour of inflectional loci under general projections. Let $X$ be a smooth projective variety of dimension $r$. Let $\psi \colon X \dashrightarrow \PP V^* = \PP^n$ be a map, where $n \ge 2$. We denote by $d^k$ the dimension of $\Osc^k ( X, x)$ at the generic point, and write $k' := \max\{ k \ge 0 : d^k \le n \}$. For $0 \le k \le k'$, we consider the associated inflectional loci $\Phi^k \subset X$.

Now suppose $W \subset V$ is a proper subspace of dimension $m + 1$. Composing with the projection $\PP V^* \dashrightarrow \PP W^*$, we obtain a map $X \dashrightarrow \PP^m$, together with osculating spaces $\Osc^k_W (X, x) \subseteq \PP^m$ and inflectional loci $\Phi^k_W \subset X$. We write $d^k_W$ for the generic value of $\dim ( \Osc^k_W ( X, x) )$. Write $k'_m := \max \{ k : d^k \le m \}$.

Note moreover that the centre of the projection $\PP^n \dashrightarrow \PP^m$ is $\PP W^\perp = \PP^{n-m-1}$, where $W^\perp = \Ker ( V^* \to W^* )$ is the orthogonal complement of $W$ under the natural pairing $V \times V^* \to \K$. 

\begin{theorem} \label{IncompleteSystems} Let $\psi \colon X \dashrightarrow \PP V^* = \PP^n$ be as above, and let $W \subset V$ be a proper subspace of dimension $m + 1$. Assume in addition that
\begin{equation} d^{(k'_m - 1)} \ \le \ d^{k'_m} - r , \label{TechDimAssp} \end{equation}
where $\dim (X) = r$. If $W$ is general in the Grassmannian $\Gr ( m+1, V )$, the following holds.
\begin{enumerate}
\item[(a)] Suppose $0 \le k < k'_m$. Then $d^k_W = d^k$ and $\Phi^k_W = \Phi^k$. In particular, the projection $\Osc^k ( X, x ) \to \Osc^k_W ( X, x )$ is an isomorphism for all $x \not\in \Phi^k$.
\item[(b)] Suppose $k = k'_m$. Then $d^k_W = d^k$ and $\Phi^k_W$ is the union of $\Phi^k$ and a locus which, if nonempty, is smooth and equidimensional of the expected dimension $r + d^k_W - m - 1$.
\end{enumerate}
\end{theorem}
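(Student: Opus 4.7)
The plan is to translate the statement into a transversality problem on the Grassmannian $Z := \Gr(n - d^k, V)$ and apply Kleiman's theorem. Setting $V_x^k := V \cap H^0(X, \cL \otimes \cI_x^{k+1})$, the identity \eqref{DefnOscIm} applied with $W$ in place of $V$ gives $\Osc^k_W(X, x) = \PP(W \cap V_x^k)^\perp$, so $\dim \Osc^k_W(X, x) = m - \dim(W \cap V_x^k)$. On the smooth open locus $X_0 := X \setminus \Phi^k$, the kernel of the jet map $j^k$ has constant rank $n - d^k$, and the family $\{ V_x^k \}_{x \in X_0}$ thus assembles into a morphism $\alpha \colon X_0 \to Z$, $x \mapsto V_x^k$.

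For each $W \in \Gr(m+1, V)$, consider the Schubert subvariety $\Sigma(W) := \{U \in Z : \dim(U \cap W) \ge m - d^k + 1\}$, which has codimension $m - d^k + 1$ in $Z$ by the standard formula for Schubert cells. The locus in $X_0$ where $\Osc^k_W$ drops below the expected dimension is exactly $L_W := \alpha^{-1}(\Sigma(W))$. Since $\GL(V)$ acts transitively on $Z$, Kleiman's Theorem \ref{Kleiman} (applied to $\alpha$ together with a smooth resolution of $\Sigma(W)$) implies that for $W$ general in $\Gr(m+1, V)$, the locus $L_W$ is either empty, or smooth and equidimensional of the expected dimension $r - (m - d^k + 1) = r + d^k - m - 1$. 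A second, separate observation is that for \emph{any} $W$ and $x \in \Phi^k$, one has $\dim V_x^k > n - d^k$, so
\[ \dim(W \cap V_x^k) \ \ge \ \dim W + \dim V_x^k - \dim V \ > \ m - d^k ; \]
hence $\dim \Osc^k_W(X, x) < d^k$, which (once the generic value $d^k_W = d^k$ is known) forces $x \in \Phi^k_W$. The equality $d^k_W = d^k$ itself is obtained by applying the same Kleiman transversality at a general point of $X_0$.

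For part (a) with $k < k'_m$, the hypothesis \eqref{TechDimAssp} and monotonicity of $d^k$ yield $d^k \le d^{k'_m - 1} \le d^{k'_m} - r \le m - r$, so the expected dimension of $L_W$ is negative and $L_W$ is empty for general $W$. Combined with the inclusion $\Phi^k \subseteq \Phi^k_W$, this gives $\Phi^k_W = \Phi^k$. The isomorphism $\Osc^k(X, x) \to \Osc^k_W(X, x)$ for $x \notin \Phi^k$ then follows from the identity $\Osc^k(X, x) \cap \PP W^\perp = \PP(V_x^k + W)^\perp$, which is empty exactly when $V_x^k + W = V$, and this holds for $x \notin L_W$. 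For part (b) with $k = k'_m$, the expected dimension is nonnegative, and Kleiman provides the smoothness and equidimensionality of $L_W$, so the decomposition $\Phi^k_W = \Phi^k \cup L_W$ has the asserted form.

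The main technical obstacle is the application of Kleiman's theorem to the singular Schubert variety $\Sigma(W)$, as Theorem \ref{Kleiman} requires a smooth source. This will be addressed by passing to a standard smooth resolution, for instance the flag-type incidence scheme parametrising pairs $(\ell, U) \in \Gr(m - d^k + 1, V) \times Z$ with $\ell \subset U \cap W$; this resolution is smooth, maps birationally onto $\Sigma(W)$, and pulls back through $\alpha$ to a smooth fibre product mapping birationally to $L_W$ with the required dimension, from which the claimed properties of $L_W$ can be deduced.
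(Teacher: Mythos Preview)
Your approach is essentially the paper's argument in dual form: the paper uses the Gauss-type map $a \colon X' \to \Gr(d^k + 1, V^*)$, $x \mapsto \Osc^k(X,x)$, together with the special Schubert cycle $Y_{W^\perp} = \{\Lambda : \Lambda \cap W^\perp \ne 0\}$, and under the canonical identification $\Gr(d^k+1, V^*) \cong \Gr(n - d^k, V)$ this becomes precisely your map $\alpha$ and your cycle $\Sigma(W)$. You are, if anything, slightly more explicit than the paper in flagging that the Schubert variety is singular and that a smooth resolution is needed before Theorem~\ref{Kleiman} can be applied.
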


\begin{proof} For $k \ge 0$, it follows from the definitions that $\Osc^k_W (X, x)$ is the image of $\Osc^k (X, x)$ under the projection $\PP^n \dashrightarrow \PP^m$. For $0 \le k \le k'_m$, we have
\[ \dim ( \Osc^k ( X, x ) ) + \dim ( \PP W^\perp ) - n \ \le \ d^k + (n-m-1) - n \ = \ d^k - m - 1 \ < \ 0 . \]
Thus, since $W$ is general, $\Osc^k ( X, x ) \cap \PP W^\perp$ is empty for general $x \in X$. Therefore, for $0 \le k \le k'_m$ we have $d^k_W = d^k$. Hence
\begin{equation} \hbox{$x \in \Phi^k_W$ if and only if $x \in \Phi^k$ or $\Osc^k ( X, x ) \cap \PP W^\perp$ is nonempty.} \label{critinflprelim} \end{equation}
We reformulate this last property. For fixed $k$ with $0 \le k \le k'$, denote by $Z$ the Grassmann variety $\Gr ( d^k + 1 , V^* )$. For any $W \subset V$, we consider the special Schubert cycle
\[ Y_{W^\perp} \ := \ \{ \Lambda \subset V^* \hbox{ of dimension } d^k + 1 : \Lambda \cap W^\perp \ne 0 \} \ \subset \ Z . \]
The association $x \mapsto \Osc^k ( X, x )$ defines a rational map $a \colon X \dashrightarrow Z$, whose indeterminacy locus is exactly $\Phi^k$. (This may be thought of as a generalised Gauss map.) Write $X' := X \setminus \Phi^k$ for the locus of definition of $a$. Then (\ref{critinflprelim}) is equivalent to
\begin{equation} \Phi^k_W \ = \ \Phi^k \cup a^{-1} \left( Y_{W^\perp} \right) \ \cong \ \Phi^k \cup \left( Y_{W^\perp} \times_Z X' \right) . \label{critinfl} \end{equation}

Now $G := \GL ( V^* )$ has a natural transitive action on $\Gr ( \ell , V^* )$ for any $1 \le \ell \le n$. Unwinding the definitions, we see that
\begin{equation} \gamma \cdot Y_{W^\perp} \ = \ Y_{\gamma \cdot W^\perp} . \label{ActionsMatch} \end{equation}
Let us estimate the dimension of $Y_{W^\perp}$. Any $\Lambda \in Y_{W^\perp}$ fits into an exact diagram
\[ \xymatrix{ 0 \ar[r] & \lambda \ar[r] \ar[d]^= & \Lambda \ar[r] \ar[d] & \Lambda / \lambda \ar[r] \ar[d] & 0 \\
 0 \ar[r] & \lambda \ar[r] & V^* \ar[r] & V^* / \lambda \ar[r] & 0 } \]
where $\lambda$ is a one-dimensional subspace of $W^\perp$. Thus
\[ \dim ( Y_{W^\perp} ) \ \le \ \dim ( \PP W^\perp ) + \dim ( \Gr ( d^k , n ) ) \ = \ (n-m-1) + d^k (n-d^k ) . \]
Computing, we obtain
\begin{equation} \dim ( Y_{W^\perp} ) + \dim ( X ) - \dim ( Z ) %\ \le \ (n-m-1) + d^k ( n - d^k ) + r - (d^k + 1)(n - d^k) \ = \ (n-m-1) + r - n + d^k 
 \ \le \ r + d^k - m - 1 . \label{IntersectionDimension} \end{equation}

Suppose now that $k < k'_m$. By (\ref{TechDimAssp}), we have $d^k \le d_{k'_m} - r \le m - r$, from which it follows that %\[ r + d^k - m - 1 \ \le \ r + m - r - m - 1 = -1 \]
 $r + d^k - m - 1 < 0$. Perturbing $W^\perp$ to a general translate $\gamma \cdot W^\perp$ if necessary, by Theorem \ref{Kleiman} and (\ref{ActionsMatch}) we may assume $Y_{W^\perp} \times_Z X'$ is empty. By (\ref{critinfl}), we conclude that $\Phi^k_W = \Phi^k$. Statement (a) now follows.

Next, suppose $k = k'_m$. Again, possibly after perturbing $W^\perp$, by Theorem \ref{Kleiman} and (\ref{ActionsMatch}) we may assume that $\left( \gamma \cdot Y_{W^\perp} \right) \times_Z X'$ is either empty, or smooth and equidimensional of dimension at most $r + d^k - m - 1$. In view of (\ref{critinfl}), and since $\Phi^k_W$ is determinantal, if $\left( \gamma \cdot Y_{W^\perp} \right) \times_Z X'$ is nonempty then the dimension is exactly $r + d^k - m - 1$. This proves part (b). \end{proof}

\begin{remark} If $X$ is a curve, part (a) follows from \cite[Proposition 4.2]{Pi1977}. Compare also with \cite[Theorem 4.1]{Pi1978} on the behaviour of \emph{polar loci} under general projections. \end{remark}

\bibliography{inflection_Segre_biblio}{}

\bibliographystyle{alpha}

\end{document}